\numberwithin{equation}{section}
\title[Free Actions on Suspensions and Joins]{\Large Free Actions on $C^*$-algebra Suspensions and Joins by Finite Cyclic Groups}
\author{Benjamin Passer}
\address{Technion-Israel Institute of Technology \\ Haifa, Israel }
\email{benjaminpas@tx.technion.ac.il}
\thanks{Supported in part by NSF DMS 1300280 and 1363250, and by a Zuckerman Fellowship at the Technion.}
\begin{document}


\def\polhk#1{\setbox0=\hbox{#1}{\ooalign{\hidewidth\lower1.5ex\hbox{`}\hidewidth\crcr\unhbox0}}}

\def\ba{\begin{aligned}}
\def\ea{\end{aligned}}
\def\be{\begin{equation}}
\def\ee{\end{equation}}
\def\beu{\begin{equation*}}
\def\eeu{\end{equation*}}

\def\C{\mathbb{C}}
\def\D{\mathbb{D}}
\def\R{\mathbb{R}}
\def\Rn{\mathbb{R}^n}
\def\S{\mathbb{S}}
\def\Sn{\mathbb{S}^n}
\def\Z{\mathbb{Z}}
\def\T{\mathbb{T}}
\def\N{\mathbb{N}}
\def\RP{\mathbb{RP}}
\def\Q{\mathbb{Q}}
\def\B{\mathbb{B}}

\def\F{\mathfrak{F}}
\def\del{\partial}

\def\!={\neq}
\def\l{\langle}
\def\r{\rangle}

\theoremstyle{definition}

\newtheorem{defn}[equation]{Definition}
\newtheorem{lem}[equation]{Lemma}
\newtheorem{prop}[equation]{Proposition}
\newtheorem{thm}[equation]{Theorem}
\newtheorem{claim}[equation]{Claim}
\newtheorem{ques}[equation]{Question}
\newtheorem{fact}[equation]{Fact}
\newtheorem{axiom}[equation]{Technical Axiom}
\newtheorem{newaxiom}[equation]{New Technical Axiom}
\newtheorem{cor}[equation]{Corollary}
\newtheorem{exam}[equation]{Example}
\newtheorem{conj}[equation]{Conjecture}
\newtheorem{altproof}[equation]{Alternative Proof}

\bibliographystyle{plain}
\maketitle

\begin{abstract}
We present a proof for certain cases of the noncommutative Borsuk-Ulam conjectures proposed by Baum, D\polhk{a}browski, and Hajac. When a unital $C^*$-algebra $A$ admits a free action of $\Z/k\Z$, $k \geq 2$, there is no equivariant map from $A$ to the $C^*$-algebraic join of $A$ and the compact \lq\lq quantum\rq\rq\hspace{0pt} group $C(\Z/k\Z)$. This also resolves D\polhk{a}browski's conjecture on unreduced suspensions of $C^*$-algebras. Finally, we formulate a different type of noncommutative join than the previous authors, which leads to additional open problems for finite cyclic group actions.
\end{abstract}

\vspace{.2 in}

Keywords: \keywords{Borsuk-Ulam, free action, saturated action, unreduced suspension, join}

\vspace{.2 in}

MSC2010: \subjclass{46L85}

\vspace{.2 in}

\section{Introduction}\label{sec:intro}

In recent years, results on $C^*$-algebras have surfaced that generalize the Borsuk-Ulam theorem, which is an algebraic topological theorem on the spheres $\S^k = \{(x_1, \ldots, x_{k + 1}) \in \R^{k + 1}: x_1^2 + \ldots + x_{k + 1}^2 = 1 \}$. 

\begin{thm}[Borsuk-Ulam]
Any odd, continuous function from $\S^k$ to $\R^k$ must have the zero vector in its range, so there is no odd, continuous function from $\S^{k}$ to $\S^{k - 1}$. Equivalently, every odd, continuous function from $\S^{k - 1}$ to itself must be homotopically nontrivial. 
\end{thm}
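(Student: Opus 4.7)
The plan is to first establish the equivalence of the three formulations and then to prove the third: that every odd continuous self-map $f$ of $\S^{k-1}$ is homotopically nontrivial. The equivalences are direct. If an odd continuous $F \colon \S^k \to \R^k$ omits the origin, then $x \mapsto F(x)/\|F(x)\|$ is an odd map into $\S^{k-1}$, and conversely every odd $\S^k \to \S^{k-1}$ composed with the inclusion $\S^{k-1} \hookrightarrow \R^k$ avoids zero. For the passage to the homotopy formulation, an odd continuous $g \colon \S^k \to \S^{k-1}$ restricts on the equator to an odd self-map of $\S^{k-1}$ that is null-homotopic through its extension over the upper hemisphere; conversely, given a null-homotopic odd self-map with null-homotopy $H \colon D^k \to \S^{k-1}$, setting $\widetilde{H}(v,s) = -H(-v)$ on the lower hemisphere glues continuously with $H$ on the equator using the oddness of $f$, producing an odd continuous $\S^k \to \S^{k-1}$.

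For the main assertion, the case $k=1$ is immediate: the only odd continuous self-maps of $\S^0 = \{-1,1\}$ are the identity and the antipodal swap, neither of which is null-homotopic because continuous homotopies in the discrete space $\S^0$ are constant on each component. For $k \geq 2$, I would exploit the double cover $\pi \colon \S^{k-1} \to \RP^{k-1}$. The oddness of $f$ implies that $\pi f$ factors through $\pi$, producing a quotient map $\bar f \colon \RP^{k-1} \to \RP^{k-1}$ with $\pi f = \bar f \pi$. I claim $\bar f^* \alpha = \alpha$, where $\alpha$ generates $H^1(\RP^{k-1};\Z/2\Z)$: otherwise $\bar f$ lifts through $\pi$ to some $g \colon \RP^{k-1} \to \S^{k-1}$ with $\pi g = \bar f$, so $g \pi \colon \S^{k-1} \to \S^{k-1}$ satisfies $\pi(g \pi) = \pi f$, forcing $g\pi = f$ or $g\pi = -f$ by connectedness; and since $g\pi$ factors through the antipodal quotient, it must satisfy $g\pi(x) = g\pi(-x)$, forcing $f(x) = f(-x) = -f(x)$ by oddness, hence $f \equiv 0$, contradicting the codomain. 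Since $H^*(\RP^{k-1};\Z/2\Z) \cong \Z/2\Z[\alpha]/(\alpha^k)$ and $\bar f^*$ is a ring homomorphism, we conclude $\bar f^*(\alpha^{k-1}) = \alpha^{k-1} \neq 0$, so $\bar f$ has odd mod-two degree, and therefore $f$ has odd degree and is not null-homotopic.

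The main obstacle is the cohomological step $\bar f^* \alpha = \alpha$; a direct verification in low dimensions replaces it by a lifting argument, as for $k=2$ any odd continuous $f \colon \S^1 \to \S^1$ lifts to $\tilde f \colon \R \to \R$ satisfying $\tilde f(t + \tfrac{1}{2}) - \tilde f(t) \in \Z + \tfrac{1}{2}$, making the degree $\tilde f(1) - \tilde f(0)$ an odd integer. An alternative route is induction on $k$ by restricting a hypothetical counterexample to a lower-dimensional equator and applying simplicial approximation, but the projective-space approach is cleaner and already points toward the $\Z/2\Z$-equivariant perspective underlying the noncommutative generalizations developed in the remainder of the paper.
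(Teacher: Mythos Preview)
The paper does not supply a proof of this theorem: it is stated in the introduction as the classical Borsuk-Ulam theorem, with no argument given, and serves purely as background motivation for the noncommutative conjectures that follow. So there is no ``paper's own proof'' to compare against.

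That said, your proposal is a correct and standard proof of the classical result. The equivalence of the three formulations is handled cleanly, and the core argument---passing to the quotient $\bar f\colon \RP^{k-1}\to\RP^{k-1}$, showing $\bar f^*\alpha=\alpha$ via the covering-lifting criterion, and then using the ring structure of $H^*(\RP^{k-1};\Z/2\Z)$ to force odd degree---is exactly one of the textbook routes. Your justification of $\bar f^*\alpha=\alpha$ is sound: if it failed, the lift $g$ would produce an even map equal to $\pm f$, contradicting oddness. One cosmetic point: in the gluing construction the notation $\widetilde H(v,s)=-H(-v)$ introduces an unused variable $s$; you presumably mean the map on the lower hemisphere is $x\mapsto -H(-x)$ under the obvious identification with $D^k$.

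Since the paper treats Borsuk--Ulam as an input rather than something to be proved, your write-up goes well beyond what the paper does here; the $\Z/2\Z$-equivariant/projective-space viewpoint you adopt is indeed in the same spirit as the equivariant arguments the paper develops later for $C^*$-algebras.
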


The Borsuk-Ulam theorem has a multitude of generalizations in a wide variety of settings. The $C^*$-algebraic variants are motivated by the Gelfand-Naimark correspondence, which states that every commutative $C^*$-algebra $A$ with unit may be written uniquely as $C(X)$ for some compact Hausdorff space $X$. This correspondence takes the form of a contravariant functor, where continuous maps $X \to Y$ are dual to unital $*$-homomorphisms $C(Y) \to C(X)$. In this sense, one may think of a \textit{noncommutative} $C^*$-algebra with unit as a dual object to some nebulous \lq\lq compact noncommutative space.\rq\rq \hspace{1pt}This type of reasoning is more grounded when a noncommutative $C^*$-algebra $A$ is obtained from some $C(X)$ by a deformation procedure, such as the $q$-deformed (\cite{va90}) and $\theta$-deformed (\cite{ma91}, \cite{co02}, \cite{na97}) noncommutative spheres. In both of these cases, a $C^*$-algebraic analogue of the Borsuk-Ulam theorem holds, where oddness of functions is replaced with discussion of a natural $\Z/2\Z$ action on the algebra, which we call the antipodal action.

\begin{thm}(Yamashita, \cite{ya13} Corollary 15)
Fix $0 < q \leq 1$ and $m, n \in \Z^+$ with $m < n$. There is no antipodally equivariant, unital $*$-homomorphism from $C(\mathbb{S}^m_q)$ to $C(\mathbb{S}^n_q)$.
\end{thm}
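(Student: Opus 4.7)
The plan is to import the classical Borsuk-Ulam argument via characteristic classes on real projective space into the $C^*$-algebraic setting, with $K$-theory as the main invariant. First I would establish that $C(\mathbb{S}^n_q)$ shares the $K$-theory of the commutative sphere, $K_0 \oplus K_1 \cong \mathbb{Z} \oplus \mathbb{Z}$, with a Bott-type generator in the parity-appropriate degree. For both the $q$-deformed and $\theta$-deformed families this follows from known $KK$-equivalences with the undeformed sphere $C(\mathbb{S}^n)$.

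Next I would exploit freeness of the antipodal $\mathbb{Z}/2\mathbb{Z}$-action: the crossed product $C(\mathbb{S}^n_q) \rtimes \mathbb{Z}/2\mathbb{Z}$ is Morita equivalent to the fixed-point subalgebra $C(\mathbb{RP}^n_q)$, a quantum real projective space. A six-term exact sequence (or transfer through the commutative model via a $\mathbb{Z}/2\mathbb{Z}$-equivariant $KK$-equivalence) yields a distinguished tautological class $[L_n] - [1] \in K_0(C(\mathbb{RP}^n_q))$ realized by the sign-representation spectral subspace of the antipodal action; its torsion order is $2^{\lfloor n/2 \rfloor}$, matching $\widetilde{K}^0(\mathbb{RP}^n)$.

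An antipodally equivariant unital $*$-homomorphism $\phi : C(\mathbb{S}^m_q) \to C(\mathbb{S}^n_q)$ restricts to $\tilde{\phi} : C(\mathbb{RP}^m_q) \to C(\mathbb{RP}^n_q)$ on fixed-point subalgebras. Because $\phi$ respects the $\mathbb{Z}/2\mathbb{Z}$-spectral decompositions, the sign subspace of the source maps into the sign subspace of the target, giving $\tilde{\phi}_*([L_m] - [1]) = [L_n] - [1]$. Hence the order of $[L_n] - [1]$ must divide the order of $[L_m] - [1]$, i.e.\ $2^{\lfloor n/2 \rfloor}$ divides $2^{\lfloor m/2 \rfloor}$, which fails whenever $\lfloor n/2 \rfloor > \lfloor m/2 \rfloor$ and produces the desired contradiction.

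The main obstacle is twofold. First, the borderline cases $m = 2k,\ n = 2k+1$ (where $\lfloor m/2 \rfloor = \lfloor n/2 \rfloor$) are not obstructed by $K_0$-torsion alone; I would address these by passing to equivariant $KK$-theory or $KO$-theory and using a graded invariant analogous to $w_1^{n+1} \neq 0$ in $H^*(\mathbb{RP}^n; \mathbb{Z}/2)$, or by reducing to the previous regime via an equivariant suspension. Second, verifying that the intrinsic description of $[L_n]$ via the spectral subspace really matches the classical tautological class in the deformed setting requires checking compatibility with a chosen $KK$-equivalence between $C(\mathbb{S}^n_q)$ and $C(\mathbb{S}^n)$ — a technical step that is nevertheless essentially contained in the existing literature on noncommutative spheres.
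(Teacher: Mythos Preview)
The paper does not contain a proof of this statement; it is quoted in the introduction as a known result due to Yamashita (Corollary 15 of \cite{ya13}) and serves only as motivation for the conjectures discussed afterward. There is therefore no proof in the present paper against which to compare your argument.

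That said, your sketch is broadly in the spirit of what Yamashita actually does: the cited reference establishes a $\Z/2\Z$-equivariant $KK$-equivalence between $C(\S^n_q)$ and $C(\S^n)$ and then transports the question back to the classical Borsuk-Ulam theorem. Your approach via torsion orders in $K_0$ of quantum real projective spaces is a concrete shadow of that idea. You correctly identify the weak point: the borderline cases $m=2k$, $n=2k+1$ are not obstructed by $K_0$-torsion alone, and your suggested patches (equivariant $KK$, $KO$, or suspension) each require nontrivial additional work. Yamashita's route avoids this parity issue altogether by working in equivariant $KK$-theory from the outset and reducing to the commutative statement rather than extracting a numerical invariant. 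If you want to complete your argument, committing to the equivariant $KK$-equivalence (which you already mention as a technical ingredient) is the cleanest fix, and is essentially what the cited source does.
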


\begin{thm}(P, \cite{pa15a} Corollary 4.6)
Fix $k \in \Z^+$ and two $\theta$-deformed spheres $C(\mathbb{S}^k_\phi)$ and $C(\mathbb{S}^{k+1}_\psi)$ of dimensions $k$ and $k + 1$, respectively. There is no antipodally equivariant, unital $*$-homomorphism from $C(\mathbb{S}^k_\phi)$ to $C(\mathbb{S}^{k+1}_\psi)$.
\end{thm}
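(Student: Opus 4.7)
The plan is to exploit the fact that the $\theta$-deformed spheres $C(\mathbb{S}^k_\theta)$ are Rieffel deformations of the classical spheres $C(\mathbb{S}^k)$ along a natural torus action, and that the antipodal $\Z/2\Z$-action commutes with this torus action. Because the deformation is therefore $\Z/2\Z$-equivariant, the algebras $\{C(\mathbb{S}^k_{t\theta})\}_{t \in [0,1]}$ assemble into a continuous field of $C^*$-algebras whose fibers all share the $K$-theory of the classical sphere, and the same holds for the crossed products $C(\mathbb{S}^k_{t\theta}) \rtimes \Z/2\Z$, whose $K$-theory agrees with that of classical $C(\RP^k)$. This continuous family is the vehicle for pushing the classical Borsuk-Ulam obstruction into the deformed setting.

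Assume for contradiction that $\phi: C(\mathbb{S}^k_\phi) \to C(\mathbb{S}^{k+1}_\psi)$ is a unital antipodally equivariant $*$-homomorphism. First I would pass to crossed products to obtain $\Phi: C(\mathbb{S}^k_\phi) \rtimes \Z/2\Z \to C(\mathbb{S}^{k+1}_\psi) \rtimes \Z/2\Z$, and then apply an invariant strong enough to separate $\RP^k$ from $\RP^{k+1}$ (for instance $K$-theory with $\Z/2\Z$-coefficients, or an equivariant index built from the tautological line bundle carried by the $\Z/2\Z$-action). Under the identifications above, $\Phi$ would induce a map between invariants indistinguishable from that of a hypothetical classical antipodal map $\mathbb{S}^{k+1} \to \mathbb{S}^k$. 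But the classical Borsuk-Ulam theorem forbids such a map — concretely, the generator of the relevant invariant on $\RP^{k+1}$ has a non-vanishing $(k+1)$-st power while its analogue on $\RP^k$ already vanishes — and this contradicts the existence of $\Phi$.

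The main obstacle is ensuring the transfer of the classical obstruction is faithful, which amounts to two checks. First, the chosen invariant must genuinely detect the dimension jump between $\RP^k$ and $\RP^{k+1}$; ordinary complex $K$-theory alone is not always strong enough, so mod-$2$ $K$-theory or an equivariant index is the natural choice. Second, the invariant must be preserved across the Rieffel deformation, which reduces to verifying that the generators witnessing the obstruction can be represented by elements invariant under the torus action used to deform, so that they persist as classes in the deformed algebras. Once both points are in place, the classical Borsuk-Ulam contradiction propagates to the $\theta$-deformed setting and rules out $\phi$, establishing the theorem.
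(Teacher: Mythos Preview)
This theorem is \emph{not} proved in the present paper: it is quoted from \cite{pa15a} (Corollary~4.6) as background, with no argument given here. So there is no ``paper's own proof'' to compare against; the only internal hint about the method in \cite{pa15a} is the later appeal to \cite{pa15a}, Corollary~4.12, which asserts that $(\alpha,\alpha)$-equivariant self-maps of $C(\S^{2n-1}_\rho)$ are $K_1$-nontrivial.

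Your proposal is a reasonable \emph{strategy sketch}, but not a proof. You correctly identify the relevant structure (Rieffel deformation along a torus action commuting with the antipodal $\Z/2\Z$, stability of $K$-theory under the deformation, passage to crossed products and hence to $\RP^k$-type invariants), and you are honest about the two nontrivial verifications left open: that the chosen invariant actually separates $\RP^k$ from $\RP^{k+1}$, and that the obstruction class survives the deformation. Until those are carried out, what you have written is a plan, not an argument; in particular, the sentence ``$\Phi$ would induce a map between invariants indistinguishable from that of a hypothetical classical antipodal map'' is exactly the step that requires work, and you have not done it.

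As for comparison: the argument in \cite{pa15a}, insofar as one can infer from the use of Corollary~4.12 later in this paper, proceeds more directly through ordinary $K$-theory of the $\theta$-spheres themselves (tracking an explicit $K_1$ class carried by the odd generators) rather than through crossed products and mod-$2$ $K$-theory of projective-space quotients. Your route via $C(\S^k_\theta)\rtimes\Z/2\Z$ and a $\Z/2$-coefficient invariant is plausible and conceptually clean, but it is a genuinely different packaging of the obstruction, and you would still owe the reader the explicit computation showing the relevant class is deformation-invariant and dimension-detecting.
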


The above theorems are related to, but they are not quite encapsulated by, Question 4 of A. Taghavi in \cite{ta12} and Conjecture 3.1 of L. D\polhk{a}browski in \cite{da15}. If $A$ is a unital $C^*$-algebra with a $\Z/2\Z$ action generated by $\alpha: A \to A$, then this action extends to the unreduced suspension 
\begin{equation*}
\Sigma A = \{ f \in C([-1, 1], A): f(-1), f(1) \in \C \}
\end{equation*}
as $\alpha_\Sigma$, defined by the rule
\be\label{eq:newaction}
\alpha_\Sigma f(t) = \alpha(f(-t)).
\ee
Note that we use the interval $[-1, 1]$ instead of $[0, 1]$, so our action looks different from the formulation in \cite{da15}, but it is equivalent.

The commutative sphere $C(\S^{k + 1})$ is the unreduced suspension of $C(\S^k)$, and the extension of the antipodal action is again antipodal. Now, for a $\theta$-deformed sphere $C(\S^k_\rho)$, $\Sigma C(\S^k_\rho)$ is another $\theta$-deformed sphere of dimension $k + 1$, but not all $\theta$-deformed spheres may be obtained in this way; the new generator $f(t) = t$ is always central. D\polhk{a}browski conjectured the following in pursuit of noncommutative Brouwer fixed point theorems.

\begin{conj}[D\polhk{a}browski, \cite{da15} Conjecture 3.1]\label{conj:dabgen}
For a unital $C^*$-algebra $A$ with a free action of $\Z/2\Z$, there is no $\Z/2\Z$-equivariant $*$-homomorphism $\phi: A \to \Sigma A$.
\end{conj}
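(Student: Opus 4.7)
The plan is to recognize the unreduced suspension as the Baum--D\polhk{a}browski--Hajac $C^*$-algebraic join of $A$ with the group $C^*$-algebra $C(\Z/2\Z)$, and then derive a contradiction by combining an equivariant homotopy argument with the freeness of $\alpha$. First I would identify $\Sigma A$ equivariantly with $A * C(\Z/2\Z) = \{(f_1, f_2) \in C([0,1], A)^2 : f_i(0) \in \C, \ f_1(1) = f_2(1)\}$, equipped with the diagonal action $(f_1, f_2) \mapsto (\alpha \circ f_2, \alpha \circ f_1)$. The isomorphism $\Phi : \Sigma A \to A * C(\Z/2\Z)$ splits $[-1, 1]$ at $t = 0$ by sending $g \mapsto (s \mapsto g(1-s),\, s \mapsto g(s-1))$: the scalar endpoints $g(\pm 1)$ become the scalar values at $s = 0$, the value $g(0)$ becomes the common value at $s = 1$, and a direct computation shows $\Phi$ intertwines $\alpha_\Sigma$ with the swap--$\alpha$ action on the join. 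Under this identification, Conjecture \ref{conj:dabgen} is equivalent to the nonexistence of an equivariant $*$-homomorphism $\phi : A \to A * C(\Z/2\Z)$.

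Suppose for contradiction that such a $\phi$ exists. Evaluating at the two endpoints of $[0, 1]$ inside the join recovers boundary data: $\mathrm{ev}_0 \circ \phi : A \to \C \oplus \C$ encodes a single character $\chi$ of $A$ (its swap-partner being $\chi \circ \alpha$), while $\mathrm{ev}_1 \circ \phi$ is an equivariant $*$-homomorphism $\psi : A \to A$. The whole $\phi$ is then a continuous equivariant homotopy in $\mathrm{Hom}_{\Z/2\Z}(A, A \oplus A)$ from $(\chi, \chi \circ \alpha)$ to the diagonal $(\psi, \psi)$. For any unitary $u \in A$, the induced unitary path from the pair of scalar unitaries $(\chi(u), \chi(\alpha u))$ to $(\psi(u), \psi(u))$ forces $[\psi(u)] = 0$ in $K_1(A)$; a parallel argument with projections forces $\psi_*$ on $K_0(A)$ to factor through $\Z \cdot [1_A]$. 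Thus $\psi$ is ``rank-one'' at the level of $K$-theory.

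The contradiction then arises from the freeness of $\alpha$. Freeness provides a strong connection, i.e., witnesses $a_j, b_j \in A^-$ with $\sum_j a_j b_j = 1_A$, encoding the fact that the spectral subspace $A^-$ is an invertible $A^\alpha$-bimodule. Since $\psi$ is equivariant, $\psi(a_j), \psi(b_j) \in A^-$ and $\sum_j \psi(a_j) \psi(b_j) = 1$, so $\psi$ must preserve the class $[A^-]$ in $K_0(A \rtimes \Z/2\Z) \cong K_0(A^\alpha)$. The rank-one factorization of $\psi_*$ forced by the homotopy is incompatible with preservation of this Pic-group class, yielding the contradiction. I expect the main obstacle to be making this final incompatibility quantitative in full generality; the cleanest route is likely through a Pimsner--Voiculescu-type computation of $K_*(A \rtimes \Z/2\Z) \cong K_*(A^\alpha)$ together with an explicit tracking of the strong-connection class under the decomposition $A = A^+ \oplus A^-$.
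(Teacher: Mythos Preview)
Your identification of $\Sigma A$ with the join and your extraction of the boundary data are correct, and the $K$-theory constraints you derive on $\psi_*$ (vanishing on $K_1(A)$, image in $\Z[1_A]$ on $K_0(A)$) are valid. However, the final step is a genuine gap, and you yourself flag it as the ``main obstacle.'' The constraints you obtain live in $K_*(A)$, while the class $[A^-]$ you invoke lives in $K_0(A^\alpha)$ (equivalently $K_0(A\rtimes\Z/2\Z)$). Nothing in your homotopy forces the restricted map $\psi|_{A^\alpha}$ to be rank-one on $K_0(A^\alpha)$: the intermediate homomorphisms $\phi_t$ are not $\alpha$-equivariant, so they do not pass to $A^\alpha$ or to the crossed product, and the inclusion $K_0(A^\alpha)\to K_0(A)$ need not be injective. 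Worse, if $K_0(A)=\Z[1_A]$ and $K_1(A)=0$ your constraints on $\psi$ become vacuous, yet the conjecture must still hold for such $A$; so the argument cannot close without an additional, non-$K$-theoretic ingredient. The Pimsner--Voiculescu suggestion is also off target, as PV handles $\Z$-actions rather than $\Z/2\Z$.

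The paper's route is entirely different and avoids $K$-theory. The key observation you are missing is that the existence of the character $\chi=\phi_0$ already forces the commutator ideal $I\subset A$ to be proper, so $A$ has a nontrivial abelianization $B=A/I\cong C(X)$. One then checks that $\alpha$ descends to $B$ and that freeness (saturation) survives the quotient, because spectral subspaces map to spectral subspaces. The entire path $\phi_t$ descends to $B$, giving a homotopy on a genuine compact Hausdorff space $X$ between an equivariant self-map and a constant map, and this contradicts the classical topological result (Volovikov/Dold) that equivariant self-maps of free finite-group spaces are never null-homotopic. In short: rather than trying to manufacture a $K$-theoretic obstruction inside $A$, pass to the commutative quotient that the character hands you and invoke the topological theorem there.
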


This conjecture is deeply related to the quantum Borsuk-Ulam conjectures posed in \cite{ba15} for compact quantum groups acting on unital $C^*$-algebras, and these conjectures include a topological subcase. The underlying operations (see \cite{ba13} and \cite{da15b}) are the topological join
\beu
X * G \cong [0,1] \times X \times G / \sim
\eeu
\beu
(0, x_0, g) \sim (0, x_1, g) \textrm{ for all } x_0, x_1 \in X, g \in G
\eeu
\beu
(1, x, g_0) \sim (1, x, g_1) \textrm{ for all } x \in X, g_0, g_1 \in G
\eeu
of compact spaces $X$ and $G$, and the equivariant join
\beu
A \circledast_\delta H = \{f \in C([0,1], A \otimes_\textrm{min} H): f(0) \in \C \otimes H, f(1) \in \delta(A)\}
\eeu
of a unital $C^*$-algebra $A$ and a compact quantum group $(H, \Delta)$. Here $\delta: A \to A \otimes_\textrm{min} H$ denotes a coaction of $H$ on $A$, meaning $\delta$ is an injective, unital $*$-homomorphism satisfying the coassociativity identity
\beu
(\delta \otimes \mathrm{id}) \circ \delta = (\mathrm{id} \otimes \Delta) \circ \delta
\eeu
and the counitality condition
\beu
\overline{\left\{\sum_{\textrm{finite}} \delta(a_i)(1\otimes h_i): a_i \in A, h_i \in H \right\}} = A \otimes_{\mathrm{min}} H.
\eeu
The coaction $\delta$ is called free exactly when it meets the Ellwood condition
\be\label{eq:ellwood}
\overline{\left\{ \sum_\textrm{finite} (a_i \otimes 1)\delta(b_i): a_i, b_i \in A \right\} } = A \otimes_\mathrm{min} H
\ee
from Theorem 2.4 of \cite{el00}.

When a compact group $G$ acts on $X$ freely, the diagonal action of $G$ on $X*G$, $(t, x, g) \cdot h = (t, x\cdot h, gh)$, is also free. However, the induced (free) action $\delta_\Delta$ of a compact quantum group $(H, \Delta)$ on $A \circledast_\delta H$ is not defined diagonally, as $H$ may be noncommutative, but is instead the result of pointwise application of $\textrm{id} \otimes \Delta$. This apparent discrepancy from the topological case is remedied by the choice of boundary condition; for $f \in A \circledast_\delta H$, the requirement of $f(1) \in \delta(A)$ instead of $f(1) \in A \otimes \C$ is suited to this action. There is also a non-equivariant join of unital $C^*$-algebras,
\beu
A \circledast B = \{f \in C([0,1], A \otimes_\textrm{min} B): f(0) \in \C \otimes B, f(1) \in \C \otimes A\},
\eeu
and if $G$ is a compact group acting on $A$, the diagonal action $d$ on $A \circledast C(G)$ may be applied. Moreover, it is known from \cite{ba15} that $(A \circledast_\delta C(G), \delta_\Delta)$ and $(A \circledast C(G), d)$ are equivariantly isomorphic, and $(A \circledast C(\Z/2\Z), d)$ is also equivariantly isomorphic to $(\Sigma A, \alpha_\Sigma)$.

\begin{conj}[Baum-D\polhk{a}browski-Hajac, \cite{ba15} Conjecture 2.3 Type 1]\label{conj:BDH}
Let $A$ be a unital $C^*$-algebra with a free action of a nontrivial compact quantum group $(H, \Delta)$. Also, let $A \circledast_\delta H$ be the equivariant noncommutative join $C^*$-algebra of $A$ and $H$ with the induced free action of $(H, \Delta)$ given by $\delta_\Delta$. Then there is no $H$-equivariant $*$-homomorphism from $A$ to $A \circledast_\delta H$.
\end{conj}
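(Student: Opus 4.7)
The plan is to establish the cyclic case of the conjecture promised in the abstract, i.e., when $H = C(\Z/k\Z)$ for $k \geq 2$; this case already implies Conjecture~\ref{conj:dabgen} since $A \circledast C(\Z/2\Z) \cong \Sigma A$ equivariantly. Using the equivariant isomorphism $(A \circledast_\delta C(\Z/k\Z), \delta_\Delta) \cong (A \circledast C(\Z/k\Z), d)$ already noted, the problem reduces to ruling out an equivariant unital $*$-homomorphism $\phi \colon A \to A \circledast C(\Z/k\Z)$ with the diagonal $\Z/k\Z$-action on the target.

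First I would unpack $A \circledast C(\Z/k\Z)$ as the algebra of $k$-tuples $(f_1, \dots, f_k) \in C([0,1], A)^k$ satisfying $f_i(0) \in \C$ for each $i$ and $f_1(1) = \dots = f_k(1)$, upon which the generator of $\Z/k\Z$ acts by cyclic permutation composed with $\alpha$. A short spectral-subspace calculation identifies the $j$-th spectral subspace $B_j$ of $B := A \circledast C(\Z/k\Z)$ with $\{g \in C([0,1], A) \colon g(0) \in \C, \ g(1) \in A_j\}$, where multiplication in $B$ corresponds to pointwise multiplication of the associated paths in $A$. Then $\phi$ decomposes as maps $\phi_j \colon A_j \to B_j$, and evaluating $\phi$ at the two endpoints yields a $*$-homomorphism $A \to C(\Z/k\Z)$ (equivariant for translation) at $t = 0$ and a unital equivariant endomorphism $A \to A$ at $t = 1$, joined by a continuous path of equivariant $*$-homomorphisms $A \to A \otimes_{\mathrm{min}} C(\Z/k\Z)$.

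The heart of the proof is finding a homotopy invariant of this path of $*$-homomorphisms that distinguishes its two endpoints. The template from the commutative case is the mod-$k$ equivariant cohomological index, which satisfies $\operatorname{ind}(X \ast \Z/k\Z) = \operatorname{ind}(X) + 1$ and directly obstructs equivariant maps $X \ast \Z/k\Z \to X$. In the noncommutative setting I would exploit the strong gradedness supplied by freeness --- $A_j A_{-j} = A_0$ --- so that by choosing finite $\{x_i\} \subset A_1$, $\{y_i\} \subset A_{-1}$ with $\sum_i x_i y_i = 1$, one obtains rigid pointwise identities of the form $\sum_i \tilde\phi_1(x_i)(t)\tilde\phi_{-1}(y_i)(t) = 1$, where $\tilde\phi_j \colon A_j \to C([0,1], A)$ records the first-coordinate path of $\phi_j$. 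These identities pin down the $t = 0$ scalars $\tilde\phi_{\pm 1}(\cdot)(0) \in \C$ in a way that, upon iterating $\phi$ into an $n$-fold join $A \circledast C(\Z/k\Z)^{\circledast n}$ and extracting a $K$-theoretic or index-theoretic invariant of the free action (for instance, by pairing against a canonical class in $K_1(A \rtimes \Z/k\Z)$), should force a contradiction between the controlled complexity of the left-hand endpoint, which factors through $\C$, and the nontrivial structure of the right-hand endpoint. The main obstacle, which explains the paper's restriction to finite cyclic groups, is making this invariant genuinely noncommutative: strong gradedness and spectral projections are cleanly available only for abelian (quantum) groups, whereas a general $H$ would require a Kasparov-theoretic equivariant index whose behaviour on iterated joins seems significantly harder to control.
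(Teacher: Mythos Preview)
Your setup is accurate: the paper only proves the case $H = C(\Z/k\Z)$, and your reduction to ruling out an equivariant unital $*$-homomorphism $A \to A \circledast C(\Z/k\Z)$, together with the identification of spectral subspaces and the path of $*$-homomorphisms from a one-dimensional representation to an equivariant endomorphism, is correct and matches what the paper does. The gap is in the paragraph that begins ``The heart of the proof.'' You never actually produce the invariant; the phrase ``should force a contradiction'' is where a proof is needed, and the suggested pairing against a class in $K_1(A \rtimes \Z/k\Z)$ is neither carried out nor is it how the paper (or any of its alternative proofs) proceeds.

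The paper's main argument is both simpler and quite different from what you propose: it quotients $A$ by the closed two-sided ideal generated by commutators to obtain a nontrivial commutative quotient $B = C(X)$ (nontrivial because the $t=0$ endpoint is a one-dimensional representation), checks that the $\Z/k\Z$-action descends and remains saturated (hence free on $X$), pushes the entire path $\phi_t$ down to $B$, and then invokes the purely topological result of Volovikov that no equivariant map $X * \Z/k\Z \to X$ exists. No $K$-theory is used. The paper also records two alternative proofs closer in spirit to your sketch: one iterates $\phi$ to obtain equivariant maps $A \to C((\Z/k\Z)^{*n})$ and then uses the strong-gradedness identity $1 = \sum a_i b_i$ exactly as you wrote, but the contradiction comes from the observation that $\sum \psi_{2d+1}(a_i)\psi_{2d+1}(b_i) = 1$ in $C((\Z/k\Z)^{*2d+1})$ yields a continuous equivariant map $(\Z/k\Z)^{*2d+1} \to \S^{2d-1}$, which is ruled out by Dold's connectivity-versus-dimension theorem. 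That concrete finishing step---turning the unit identity into a map to a sphere and invoking a classical Borsuk--Ulam-type obstruction---is what your proposal is missing.
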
 

\begin{conj}[Baum-D\polhk{a}browski-Hajac, \cite{ba15} Conjecture 2.2]\label{conj:BDHtopcaseonly}
Let $X$ be a compact Hausdorff space with a continuous free action of a nontrivial compact Hausdorff group $G$. Then, for the diagonal action of $G$ on $X * G$, there does not exist a $G$-equivariant continuous map $f: X * G \to X$.
\end{conj}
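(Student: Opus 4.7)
The plan is to translate the statement into an obstruction problem for principal $G$-bundles and attack it with characteristic classes. The first observation is that the diagonal $G$-action on $X*G$ is transitive on the $t=0$ slice (a copy of $G$) and collapses it to a point in the quotient, acts by the given action on the $t=1$ slice (a copy of $X$) and produces $X/G$, and for $0<t<1$ admits the identity of $G$ as orbit representative so that the slice quotient is $X$. Hence $(X*G)/G$ is naturally the mapping cone $C(q)$ of the orbit projection $q:X \to X/G$. At the same time, $X*G \to (X*G)/G$ is a principal $G$-bundle, trivial over the cone $CX \subset C(q)$ (which is contractible) and restricting to $X \to X/G$ over the base of the cone, glued along the canonical trivialization $q^*(X \to X/G) \cong X \times G$. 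An equivariant map $f:X*G \to X$ is then a morphism of principal $G$-bundles and descends to $\bar f:(X*G)/G \to X/G$ with $c_X \circ \bar f \simeq c_{X*G}$, where $c_X$ and $c_{X*G}$ denote the classifying maps into $BG$.

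For $G$ a compact Lie group, which covers the paper's case $G = \mathbb{Z}/k\mathbb{Z}$, I would invoke a cohomological index. Choose $\alpha \in H^*(BG;R)$ with $R = \mathbb{F}_p$ for $G = \mathbb{Z}/p\mathbb{Z}$ or $R = \mathbb{Z}$ for $G = S^1$, and set $\mathrm{ind}_G(Y) = \sup\{n : (c_Y^*\alpha)^n \neq 0\}$ for a free $G$-space $Y$. Existence of the equivariant $f$ forces $\mathrm{ind}_G(X*G) \leq \mathrm{ind}_G(X)$ by applying $\bar f^*$ to the corresponding powers. The heart of the argument is the classical join inequality $\mathrm{ind}_G(X*G) \geq \mathrm{ind}_G(X) + 1$, which I would establish through the long exact cohomology sequence of the cofiber $X \to X/G \to (X*G)/G$: the relation $q^* c_X^*\alpha = 0$ (the pullback of a principal bundle along its own projection is trivial) lets each power $(c_X^*\alpha)^n$ lift to $(X*G)/G$, and multiplicativity combined with the connecting homomorphism $\tilde H^*(X) \xrightarrow{\delta} H^{*+1}((X*G)/G)$ identifies $(c_{X*G}^*\alpha)^{\mathrm{ind}_G(X)+1}$ with a nonzero element arising from the top-dimensional power of the characteristic class via $\delta$. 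Monotonicity then contradicts this jump, completing the proof.

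The principal obstacle is twofold. Technically, verifying the index jump with precision requires carefully tracking how cup products interact with the connecting map $\delta$ and confirming that the relevant power of $c_{X*G}^*\alpha$ is genuinely nonzero rather than merely lying in the kernel of restriction to $X/G$; this is the classical core of equivariant Borsuk-Ulam arguments, and one must also ensure $\mathrm{ind}_G(X)$ is finite, which for compact $X$ of finite covering dimension follows immediately. Conceptually, extending from compact Lie groups to arbitrary compact Hausdorff groups is delicate: the case of nontrivial identity component reduces to a subtorus argument, but a purely profinite $G$ such as $\hat{\mathbb{Z}}$ contains no nontrivial finite subgroup and need not factor a free action through a finite quotient, requiring either an inverse-limit reduction or a distinct approach. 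For the paper's main focus $G = \mathbb{Z}/k\mathbb{Z}$ these subtleties do not arise, and because $X$ is only compact Hausdorff one should work throughout with \v{C}ech cohomology, which is compatible with the classifying-space formalism via paracompactness.
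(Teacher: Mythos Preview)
The statement you are attempting to prove is labeled a \emph{Conjecture} in the paper, and the paper does not supply a proof of it. What the paper does is cite Volovikov's result (stated here as Theorem~\ref{thm:volovikov}) to cover the case where $G$ contains a nontrivial torsion element, and then explicitly flag the torsion-free case as open: see the discussion surrounding Question~\ref{ques:countingisfun}, where the paper notes that after reducing to subgroups ``it suffices to consider $G$ which are abelian and torsion-free'' and that ``it is not yet clear'' whether the known methods extend. So there is no ``paper's own proof'' to compare against here.

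Your index-based outline for compact Lie $G$ (in particular $\Z/p\Z$) is essentially the same mechanism the paper attributes to Volovikov: a cohomological index $\mathrm{ind}_p$ which is monotone under equivariant maps and satisfies $\mathrm{ind}_p(X*\Z/p\Z)=\mathrm{ind}_p(X)+1$. Your identification of $(X*G)/G$ with the mapping cone of $q:X\to X/G$ and the ensuing cofiber argument is a clean way to see the index jump, and is compatible with what the paper sketches. One point to tighten: you claim finiteness of $\mathrm{ind}_G(X)$ ``for compact $X$ of finite covering dimension,'' but Conjecture~\ref{conj:BDHtopcaseonly} assumes only compactness, not finite dimension. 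Volovikov's result already establishes $\mathrm{ind}_p(X)<\infty$ for compact $X$ with a free $\Z/p\Z$-action, so you should invoke that rather than an additional dimension hypothesis.

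Your final paragraph correctly isolates the genuine obstruction: for a compact, torsion-free $G$ (e.g.\ the Pontryagin dual of the discrete rationals, the paper's own example) there is no finite-cyclic or torus subgroup to pass to, and no cohomological index of the above type is available. This is precisely the gap the paper leaves open, so your proposal does not resolve the conjecture beyond what the paper already records as known.
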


It is known from Corollary 3.1 of \cite{vo05} that Conjecture \ref{conj:BDHtopcaseonly} holds for $G$ with a nontrivial torsion element, in which case it is sufficient to consider $G = \Z/p\Z$ for $p$ prime. In fact, \cite{vo05} predates \cite{ba15}. The proof relies on a stable index $\textrm{ind}_p(X) < \infty$, for which existence of an equivariant map $X \to Y$ implies $\textrm{ind}_p(X) \leq \textrm{ind}_p(Y)$, and $\textrm{ind}_p(X*\Z/p\Z) = \textrm{ind}_p(X) + 1$. This is also known to imply existence of a maximum $n$ such that $(\Z/p\Z)^{*n}$ maps equivariantly into $X$. We find that the topological result in \cite{vo05} passes to the $C^*$-algebraic setting in Corollary \ref{cor:mainkjoins}. That is, Conjecture \ref{conj:dabgen} holds, and the result generalizes to $\Z/k\Z$ actions for $k > 2$.

We place Corollary \ref{cor:mainkjoins} and its topological predecessor in the context of fixed point theorems on Stone-\v{C}ech compactifications, and during this pursuit we develop alternative proofs using an iteration strategy. A key consideration in these proofs is the recasting of freeness conditions in terms of spectral subspaces.

\begin{defn}[\cite{ph09}]\label{def:saturated}
Let $\alpha: G \to \textrm{Aut}(A)$ be a strongly continuous action of a compact Hausdorff abelian group $G$ on a unital $C^*$-algebra $A$. Then $\alpha$ is \textit{saturated} if for all $\tau$ in the Pontryagin dual $\widehat{G}$, the spectral subspace $A_\tau = \{a \in A: \alpha_g(a) = \tau(g) a \textrm{ for all } g \in G\}$ satisfies $A = \overline{A_\tau^* A A_\tau}$.
\end{defn}

A free $G$-action on $X$ induces a saturated action of $G$ on $A = C(X)$ (see \cite{ph09}). While the saturation condition differs from the Ellwood coaction freeness condition (\ref{eq:ellwood}), an easy computation shows that when $\Z/k\Z$ acts on $A$, saturation of the action is equivalent to freeness of the associated coaction. Note that the equivalence of freeness and saturation has been established in a much broader context (see \cite{ba13} and \cite{de13}). Moreover, examination of spectral subspaces shows that an action of $\Z/k\Z$ on $A$ produces a grading of $A$ by $\Z/k\Z$:
\beu
A = \bigoplus_{m \in \Z/k\Z} A_m, \hspace{1 in} A_m A_n \subset A_{m+n}.
\eeu
From Theorem 8.17 of \cite{mo93}, it follows that the associated coaction is free if and only if the grading is strong, in the sense that for all $m, n \in \Z/k\Z$,
\be\label{eq:stronggrading}
A_m A_n = A_{m+n}.
\ee
These various conditions related to freeness allow us to recover the unit $1 \in A_0$ from elements in nontrivial spectral subspaces $A_m$, a crucial technique in the alternative proofs of Conjecture \ref{conj:dabgen} (as part of Corollary \ref{cor:mainkjoins}). The alternative proofs provide a potential baseline for the last remaining case of Conjecture \ref{conj:BDHtopcaseonly}, as well as the associated classical subcase of Conjecture \ref{conj:BDH}.

 Finally, given an action $\beta: \Z/k\Z \to \textrm{Aut}(A)$, we introduce a type of twisted noncommutative join $A \circledast^\beta C^*(\Z/k\Z)$ that differs somewhat from the $C^*$-algebraic joins of \cite{da15b} in order to investigate a different flavor of noncommutative Borsuk-Ulam theory:
\begin{equation*} A \circledast^\beta C^*(\Z/k\Z) := \{f \in C([0, 1], A \rtimes_\beta \mathbb{Z}/k\mathbb{Z}): f(0) \in A, f(1) \in C^*(\Z/k\Z)\}. \end{equation*}

If $\alpha$ is another action of $\Z/k\Z$ on $A$ which commutes with $\beta$, then $A \circledast^\beta C^*(\Z/k\Z)$ may be equipped with an associated action $\widetilde{\alpha}$, motivated by (\ref{eq:newaction}). In fact, if $\beta$ is trivial, $A \circledast^\beta C^*(\Z/2\Z)$ is actually isomorphic to $\Sigma A$, and these two actions are the same. If $\beta$ is trivial and $k \in \Z^+$ is arbitrary, then $A \circledast^\beta C^*(\Z/k\Z)$ coincides with the (non-equivariant) $C^*$-algebraic join $A \circledast C(\Z/k\Z)$, equipped with the diagonal action of $\Z/k\Z$.

When $A \circledast^\beta C^*(\Z/k\Z)$ is formed from a nontrivial action $\beta$, topological reductions and iteration techniques are not sufficient to prove an analogous Borsuk-Ulam theorem, and additional assumptions on $\beta$ are necessary. However, $\theta$-deformed spheres may be used to construct natural examples where $\beta$ is a nontrivial action and no equivariant maps from $A$ to $A \circledast^\beta C^*(\Z/k\Z)$ exist. Whether this phenomenon is unique to restricted families of examples is still unclear.

\section{Classical Joins}

There are many topological extensions of the Borsuk-Ulam theorem, with varying degrees of generality; we consider here a generalization given by A. Dold in 1983. Here $\textrm{dim}(\cdot)$ and $\textrm{conn}(\cdot)$ refer to covering dimension and connectivity, respectively. 

\begin{thm}[Dold, \cite{do83} Main Theorem]\label{thm:dabtopfinite}
Suppose $X$ and $Y$ are paracompact Hausdorff spaces with free actions of a finite group $G$.
\begin{enumerate}
\item If there is a continuous, equivariant map $f: X \to Y$, then $\textrm{dim}(Y) \geq 1 + \textrm{conn}(X)$.
\item\label{item:selfmaps} If $\textrm{dim}(X) < \infty$, then every continuous, equivariant map $g: X \to X$ is homotopically nontrivial.
\end{enumerate}
\end{thm}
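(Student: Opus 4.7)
My plan is to reduce to the case of a cyclic group of prime order and introduce the Conner--Floyd--Dold mod-$p$ index of a free action, which I will show is monotone under equivariant maps, bounded below by the connectivity of the source, and bounded above by the covering dimension of the target. Both statements are vacuous for trivial $G$, and any finite nontrivial $G$ contains a cyclic subgroup of prime order $p$; restricting the free actions to that subgroup keeps them free, and $G$-equivariance implies $\Z/p\Z$-equivariance, so I may assume $G = \Z/p\Z$ throughout.

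For a free $\Z/p\Z$-action on a paracompact Hausdorff space $Z$, the orbit projection $Z \to Z/G$ is a principal $\Z/p\Z$-bundle, classified up to homotopy by a map $c_Z : Z/G \to B\Z/p\Z$. I fix a positive-degree generator $\xi \in H^*(B\Z/p\Z;\mathbb{F}_p)$ (degree $1$ when $p=2$; for odd $p$, the Bockstein of the degree-$1$ generator) and set
\[
\mathrm{ind}_p(Z) \;=\; \sup\bigl\{k \geq 0 \;:\; c_Z^*(\xi^k) \neq 0 \text{ in } \check H^*(Z/G;\mathbb{F}_p)\bigr\}.
\]
Monotonicity follows because an equivariant $f:X\to Y$ descends to $\bar f:X/G\to Y/G$ pulling the $Y$-bundle back to the $X$-bundle, forcing $c_X \simeq c_Y\circ\bar f$. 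For the dimension upper bound, $Y/G$ is paracompact Hausdorff with $\dim(Y/G) \leq \dim(Y)$ (the orbit projection is closed and finite-to-one), and Čech cohomology with constant $\mathbb{F}_p$ coefficients vanishes above the covering dimension, so $\mathrm{ind}_p(Y) \leq \dim(Y)$. For the connectivity lower bound, I replace $X/G$ up to homotopy by the Borel construction $E\Z/p\Z\times_{\Z/p\Z} X$ and examine the $\mathbb{F}_p$-Serre spectral sequence of the resulting fibration $X \to X_G \to B\Z/p\Z$: if $X$ is $n$-connected, the bottom row $E_2^{*,0}$ survives to $E_\infty$ in total degree $\leq n+1$, making $c_X^*$ injective on $H^{k}(B\Z/p\Z;\mathbb{F}_p)$ for $k \leq n+1$; hence $\mathrm{ind}_p(X) \geq \mathrm{conn}(X)+1$. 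Chaining these three facts yields
\[
\mathrm{conn}(X)+1 \;\leq\; \mathrm{ind}_p(X) \;\leq\; \mathrm{ind}_p(Y) \;\leq\; \dim(Y),
\]
which proves (1).

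For (2), applying (1) with $Y = X$ already forces $\mathrm{ind}_p(X) \leq \dim(X) < \infty$; call this finite integer $n$, so $c_X^*(\xi^n) \neq 0$. For any continuous equivariant $g:X\to X$, the classifying-map identity $c_X\circ \bar g \simeq c_X$ gives $\bar g^*\bigl(c_X^*(\xi^n)\bigr) = c_X^*(\xi^n) \neq 0$, so $\bar g^*$ acts nontrivially on $\check H^n(X/G;\mathbb{F}_p)$. I expect the main obstacle to be promoting this equivariant-cohomological nontriviality of $\bar g^*$ into nontriviality of the ordinary (non-equivariant) homotopy class of $g$, because a plain null-homotopy of $g$ on $X$ need not descend through the orbit projection $X \to X/G$. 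The cleanest route I see is an obstruction argument: using discreteness of the fibre $\Z/p\Z$ together with $\dim(X)<\infty$, a null-homotopy of $g$ should, via equivariant obstruction theory, descend to a null-homotopy of $\bar g$ on $X/G$, killing positive-degree cohomology and contradicting $c_X^*(\xi^n) \neq 0$. Failing that, Dold's original proof packages the same contradiction through a direct cup-length computation in $\check H^*(X/G;\mathbb{F}_p)$ that bypasses explicit obstruction classes.
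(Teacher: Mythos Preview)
The paper does not prove this theorem; it is quoted from Dold and only part~(1) is actually invoked later (in Alternative Proof~A). Your sketch of~(1) is the standard cohomological-index argument and matches Dold's: reduce to $G=\Z/p\Z$, bound $\mathrm{ind}_p$ above by $\dim$ via vanishing of \v{C}ech cohomology above the covering dimension, and below by $\mathrm{conn}+1$ via the Serre spectral sequence of the Borel fibration. That part is correct.

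Part~(2) has a genuine gap that you yourself flag but do not close. Nontriviality of $\bar g^*$ on $\check H^*(X/G;\mathbb{F}_p)$ shows only that $\bar g$ is essential; to conclude that $g$ is, you suggest that a non-equivariant null-homotopy of $g$ ``should, via equivariant obstruction theory, descend'' to one of $\bar g$. The naive descent already fails---composing the homotopy with $\pi$ only yields $\bar g\circ\pi\simeq\mathrm{const}$, and precomposition with a finite cover is \emph{not} injective on homotopy classes (the nontrivial element of $[S^1,K(\Z/2\Z,1)]$ dies after the double cover $S^1\to S^1$)---and you never identify the relevant obstruction groups or argue that they vanish. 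A clean correct route is different: from a null-homotopy $H$ with $H_0=g$ and $H_1\equiv x_0$, build an \emph{equivariant} map $F:X*G\to X$ by $F([t,x,h])=h\cdot H(h^{-1}x,\,1-t)$; equivariance of $g$ makes this well-defined at both ends of the join, and $F$ is visibly $G$-equivariant for the diagonal action. Then monotonicity of the index, the increment $\mathrm{ind}_p(X*G)=\mathrm{ind}_p(X)+1$, and finiteness $\mathrm{ind}_p(X)\le\dim(X)<\infty$ give the contradiction---exactly the mechanism the paper cites from Volovikov as its Theorem~2.2.
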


Similar results and comments may be found in \cite{ca77}, Theorem 4 for CW-complexes, \cite{go82} for manifolds, and \cite{ya13}, Remark 17 for the noncommutative ($KK$-theoretic) point of view. When $X$ is assumed compact, the dimension restriction may be removed.

\begin{thm}[Volovikov, \cite{vo05} Corollary 3.1]\label{thm:volovikov}
Suppose $X$ is a compact (or paracompact and finite-dimensional, as above) Hausdorff space, $G$ has nontrivial torsion elements, and $G$ acts freely on $X$. Then there is no continuous, equivariant map from $X*G$ to $X$. That is, every continuous, equivariant map $f: X \to X$ is homotopically nontrivial.
\end{thm}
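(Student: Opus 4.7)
The plan is to use the mod-$p$ Volovikov index $\textrm{ind}_p$ with the properties already recalled in the introduction: monotonicity under equivariant maps, the join relation $\textrm{ind}_p(X * \Z/p\Z) = \textrm{ind}_p(X) + 1$, and finiteness when $X$ is compact or paracompact finite-dimensional. First I would reduce to the case $G = \Z/p\Z$ for a prime $p$: since $G$ contains a nontrivial torsion element, it contains a subgroup $H \cong \Z/p\Z$ for some prime $p$. The restriction of the free $G$-action on $X$ is a free $H$-action, and the natural inclusion $X * H \hookrightarrow X * G$ is $H$-equivariant. A hypothetical $G$-equivariant map $X * G \to X$ is automatically $H$-equivariant by restriction of the group, and composing with the inclusion would produce an $H$-equivariant map $X * H \to X$; so it suffices to rule out the latter.

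For the core contradiction, suppose $\tilde f: X * \Z/p\Z \to X$ is $\Z/p\Z$-equivariant. Monotonicity of $\textrm{ind}_p$ gives $\textrm{ind}_p(X * \Z/p\Z) \leq \textrm{ind}_p(X)$, while the join relation gives $\textrm{ind}_p(X * \Z/p\Z) = \textrm{ind}_p(X) + 1$. Together, $\textrm{ind}_p(X) + 1 \leq \textrm{ind}_p(X)$, which is impossible since $\textrm{ind}_p(X) < \infty$ by hypothesis.

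The equivalence between the two formulations in the theorem is a separate and elementary observation. Any equivariant $\tilde f: X * G \to X$ restricts at $t = 1$ to an equivariant self-map $f: X \to X$, and the assignment $t \mapsto \tilde f(t, x, e)$ is a (non-equivariant) nullhomotopy from $f$ to the constant map $x \mapsto \tilde f(0, x, e)$. Conversely, given an equivariant $f: X \to X$ and an ordinary nullhomotopy $F: X \times [0,1] \to X$ with $F(\cdot, 0) = f$ and $F(\cdot, 1) \equiv x_0$, the formula $\tilde f(t, x, g) = F(xg^{-1}, 1 - t) \cdot g$ is constant in $x$ at $t = 0$, equals $f(x)$ at $t = 1$, and is $G$-equivariant, so it descends to a map $X * G \to X$.

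The main obstacle is not the skeleton of the argument above, but the construction and verification of the index $\textrm{ind}_p$ with all three listed properties; in particular, finiteness under the compactness or paracompact finite-dimensionality hypothesis rests on controlling the cohomology of the Borel construction $E\Z/p\Z \times_{\Z/p\Z} X$, which is the nontrivial input from \cite{vo05} and its antecedents.
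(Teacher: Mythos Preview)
Your proposal is correct and matches the approach the paper describes. Note that the paper does not give its own self-contained proof of this theorem; it is cited as Volovikov's result, and the surrounding text merely sketches the argument via the stable index $\textrm{ind}_p$: reduce to $G=\Z/p\Z$, use finiteness of $\textrm{ind}_p(X)$, monotonicity under equivariant maps, and the join identity $\textrm{ind}_p(X*\Z/p\Z)=\textrm{ind}_p(X)+1$. Your write-up reproduces exactly this skeleton, adds the explicit subgroup reduction via the $H$-equivariant inclusion $X*H\hookrightarrow X*G$, and supplies the elementary dictionary between the two formulations in the statement, which the paper leaves implicit. Your closing caveat is also on point: the substantive content lies entirely in the construction and properties of $\textrm{ind}_p$ from \cite{vo05}, which neither you nor the paper attempts to reproduce.
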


If $G = \Z/p\Z$ for $p$ prime, then Theorem \ref{thm:volovikov} and condition (\ref{item:selfmaps}) of Theorem \ref{thm:dabtopfinite} have  converses in the following sense. Suppose that an action of $\Z/p\Z$ on $X$ is not free, so that there is a point $x_0 \in X$ which is fixed by every group element. Then the constant map sending every point in $X$ to $x_0$ is both equivariant and homotopically trivial by default.

The proof in \cite{vo05} (see also \cite{mu16}) of Theorem \ref{thm:volovikov} relies on a stable index $\textrm{ind}_p(X) < \infty$ for free actions of $\Z/p\Z$, $p$ prime. This means that $\textrm{ind}_p(X*\Z/p\Z) = \textrm{ind}_p(X) + 1$ and $\textrm{ind}_p(X) \leq \textrm{ind}_p(Y)$ whenever there is an equivariant map $X \to Y$. Stability of the index is known to not only prevent the existence of equivariant maps $X * \Z/p\Z \to X$, but also to bound the possible $n$ for which $(\Z/p\Z)^{*n}$ maps equivariantly to $X$. Our proofs of Conjecture \ref{conj:dabgen} and its $\Z/k\Z$ generalization move this result from the compact Hausdorff setting to the $C^*$-algebraic setting. Note that the iterated joins of $\Z/2\Z$ are just spheres $\S^k$, and the induced diagonal action on the join is the antipodal action on $\S^k$.

\begin{thm}\label{thm:mainkjoinsHOM}
Let $A$ be a unital $C^*$-algebra with a free action $\alpha$ of $\Z/k\Z$ and suppose $\phi = \phi_1: A \to A$ is an $\alpha$-equivariant, unital $*$-homomorphism. Then there does not exist a path $\phi_t$ of unital $*$-homomorphisms on $A$, continuous in the pointwise norm topology, connecting $\phi_1$ to a one-dimensional representation $\phi_0: A \to \C$. 
\end{thm}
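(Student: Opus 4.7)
The plan is to translate the hypothetical path $\phi_t$ into an equivariant $*$-homomorphism into a $C^*$-algebraic join, where the Borsuk-Ulam phenomenon is directly visible, and then to contradict its existence via the spectral-subspace structure of the free action. Suppose for contradiction that a path $\phi_t$ exists, and consider the map $\Psi: A \to A \circledast C(\Z/k\Z)$ given by
\[
\Psi(a)(t)(g) = \alpha^g\bigl(\phi_t(\alpha^{-g}(a))\bigr),
\]
under the identification $A \otimes C(\Z/k\Z) \cong C(\Z/k\Z, A)$. I would first check the four routine facts: $\Psi$ is a unital $*$-homomorphism; the boundary condition $\Psi(a)(0) \in \C \otimes C(\Z/k\Z)$ is equivalent to $\phi_0$ having one-dimensional image; the boundary condition $\Psi(a)(1) \in A \otimes \C$ is equivalent to $\alpha$-equivariance of $\phi_1$; and $\Psi$ itself is equivariant for the diagonal action on $A \circledast C(\Z/k\Z)$. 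Since evaluation at $g = 0$ recovers a path of the forbidden type from any such $\Psi$, the theorem is equivalent to the nonexistence of an equivariant unital $*$-homomorphism $A \to A \circledast C(\Z/k\Z)$.

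Next I would exploit the free action's strong grading. Freeness is equivalent to the strong-grading condition $A_m A_n = A_{m+n}$ recorded in (\ref{eq:stronggrading}), so in particular $1_A = \sum_i b_i c_i$ for some finite tuples $b_i \in A_1$, $c_i \in A_{-1}$. The diagonal action on $A \circledast C(\Z/k\Z)$ inherits its own strong $\Z/k\Z$-grading, and equivariance of $\Psi$ forces $\Psi(b_i)$ and $\Psi(c_i)$ into the $+1$ and $-1$ spectral subspaces of the join. The identity $\sum_i \Psi(b_i) \Psi(c_i) = 1$ then yields, at $t = 0$, a reconstruction of the unit inside $\C \otimes C(\Z/k\Z)$ via the characters $\chi_{\pm 1}$; at $t = 1$, an analogous reconstruction of $1_A$ inside $A$ via $\phi_1(A_{\pm 1})$; and continuity of the path binds these two endpoint reconstructions together through $A$.

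To close the argument I would iterate: the equivariant $\Psi$ should feed into an iteration yielding, for each $n \geq 1$, an equivariant unital $*$-homomorphism $A \to A \circledast C(\Z/k\Z)^{*n}$ with the diagonal action. On the commutative subcase this would be ruled out for $n$ sufficiently large by Volovikov's stable index $\mathrm{ind}_p$ (Theorem~\ref{thm:volovikov}), and the spectral-subspace framework of the paper should let one lift the Volovikov bound to arbitrary unital $A$: one defines a $C^*$-algebraic analogue of $\mathrm{ind}_p$ counting the minimal length of a unit-reconstruction through products across nontrivial spectral subspaces, checks that it is monotonic under equivariant unital $*$-homomorphisms, and observes that it gains a step under joining with $C(\Z/k\Z)$. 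The main obstacle is exactly this monotonicity: none of the intermediate $\phi_t$ for $0 < t < 1$ need be equivariant, so the endpoint equivariance at $t = 0, 1$ has to be propagated through the spectral-subspace structure using only the boundary data of $\Psi$, and this is where the \emph{recovery of $1 \in A_0$ from elements of nontrivial spectral subspaces} flagged in the introduction must do the real work.
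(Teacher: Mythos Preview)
Your first paragraph is correct and useful: the path $(\phi_t)$ is equivalent data to an equivariant unital $*$-homomorphism $\Psi: A \to A \circledast C(\Z/k\Z)$, and this is precisely the bridge between Theorem~\ref{thm:mainkjoinsHOM} and Corollary~\ref{cor:mainkjoins}. From there, however, the argument does not close. Your proposed $C^*$-algebraic index---``minimal number of summands in a unit-reconstruction $1=\sum b_i c_i$ with $b_i\in A_1$, $c_i\in A_{-1}$''---is trivially monotone \emph{decreasing} under equivariant unital $*$-homomorphisms (just push the relation forward), so that step is free, not an obstacle. The real missing ingredient is your unargued claim that this quantity \emph{strictly increases} when $A$ is replaced by $A \circledast C(\Z/k\Z)$; there is no evident reason it should, since elements of the join may draw their spectral weight entirely from the $C(\Z/k\Z)$ factor. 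Your closing remark about ``propagating endpoint equivariance through the intermediate $\phi_t$'' is a red herring: once $\Psi$ exists, the non-equivariance of $\phi_t$ for $0<t<1$ is irrelevant.

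The paper's proof bypasses all of this by using a feature of the hypotheses you do not exploit: the mere existence of the one-dimensional representation $\phi_0$ forces the commutator ideal $I\lhd A$ to be proper, so $B:=A/I\cong C(X)$ for a nonempty compact Hausdorff $X$. The automorphism $\alpha$ descends to $B$ (commutators map to commutators), and saturation survives the quotient because spectral subspaces of $A$ map into those of $B$. The whole path $\phi_t$ then descends to $\psi_t:C(X)\to C(X)$, which dualizes to a homotopy on $X$ from a $\Z/k\Z$-equivariant self-map to a constant map, contradicting Volovikov's Theorem~\ref{thm:volovikov} directly. No noncommutative index is needed. Your iteration idea is close in spirit to the paper's \emph{Alternative Proof~A} of Corollary~\ref{cor:mainkjoins}, but even there one never builds a $C^*$-index: one iterates via Lemma~\ref{lem:iteration} to land in the \emph{commutative} algebra $C((\Z/k\Z)^{*n})$, pushes $1=\sum b_ic_i$ forward to obtain an equivariant continuous map $(\Z/k\Z)^{*2d+1}\to\S^{2d-1}$, and then invokes Dold's Theorem~\ref{thm:dabtopfinite}. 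The step you are missing in both routes is the same: get into a commutative algebra before appealing to the topological obstruction.
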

\begin{proof}
Suppose $\phi_t: A \to A$ is a path of unital $*$-homomorphisms such that $\phi_0$ is $\C$-valued and $\phi_1$ is $\alpha$-equivariant. Since $\phi_0$ is unital and therefore is not the zero map, it follows that the closed ideal $I \subset A$ generated by commutators $ab - ba$ is proper. That is, $B := A/I$ is the algebra of continuous, complex-valued  functions on a (nonempty) compact Hausdorff space $X$. Denote the equivalence class of $a \in A$ in $B$ by $[a]$.

For simplicity of notation, identify $\alpha$ with the automorphism associated to the standard generator of $\Z/k\Z$. It follows that any element of $I$ is annihilated in the composition
\begin{equation*}
A \xrightarrow{\alpha} A \to B,
\end{equation*}
since $B$ is commutative. Therefore, $\alpha$ descends to a unital $*$-homomorphism
\begin{equation*}
\beta: B \to B
\end{equation*}
satisfying $\beta([a]) = [\alpha(a)]$. Consequently, the order of $\beta$ must also divide $k$, and $\beta$ defines a $\Z/k\Z$ action on $A/I$. Also, since $\alpha$ is free, it follows that $\alpha$ is saturated, so for any $k$th root of unity $\omega$, the spectral subspace
\begin{equation*}
A_\omega = \{ a \in A: \alpha(a) = \omega a\}
\end{equation*}
satisfies $A = \overline{A_\omega^* A A_\omega}$. Since $a \in A_\omega$ implies $[a] \in B_\omega$, it immediately follows that
\begin{equation*}
B = \overline{B_\omega^* B B_\omega},
\end{equation*}
and $\beta$ is saturated. That is, the associated action on the compact Hausdorff space $X$ is free. 

We may apply the same quotient by the commutator ideal $I$, in both the codomain and domain, to the path $\phi_t: A \to A$. This produces a family of unital $*$-homomorphisms $\psi_t: B \to B$ meeting similar boundary conditions: $\psi_0$ is $\C$-valued and $\psi_1$ is $\beta$-equivariant. Note that the homomorphisms $\psi_t$ remain continuous in the pointwise norm topology. Finally, the maps $\psi_t$ are dual to a homotopy of maps on $X$, connecting an equivariant map to a constant map. This contradicts Theorem \ref{thm:volovikov}.
\end{proof}

In contrast to the topological discussion after Theorem \ref{thm:volovikov}, Theorem \ref{thm:mainkjoinsHOM} does not have a converse, even if $k$ is prime. That is, if an action of $\Z/k\Z$ on a unital $C^*$-algebra $A$ is not free, we cannot claim that there exists some equivariant map from $A$ to $A$ which may be connected to a one-dimensional representation. For example, take $A$ to be noncommutative and simple, and equip $A$ with the trivial action, which is certainly not free. In this case, the converse fails because one-dimensional representations of $A$ simply do not exist.

\begin{cor}\label{cor:mainkjoins}
If $A$ is a unital $C^*$-algebra with a free action $\alpha$ of $\Z/k\Z$, then there are no equivariant, unital $*$-homomorphisms from $A$ to $A \circledast C(\Z/k\Z)$ with the diagonal action. Similarly, if $k = 2$, there are no $(\alpha, \alpha_\Sigma)$-equivariant, unital $*$-homomorphisms from $A$ to $\Sigma A$, so Conjecture \ref{conj:dabgen} holds.
\end{cor}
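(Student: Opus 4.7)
The plan is to deduce the corollary directly from Theorem~\ref{thm:mainkjoinsHOM} by using the parameter $t \in [0,1]$ in $A \circledast C(\Z/k\Z)$ to slice any hypothetical equivariant $*$-homomorphism into a continuous path of unital $*$-homomorphisms $A \to A$ whose endpoints realize exactly the situation that Theorem~\ref{thm:mainkjoinsHOM} forbids.

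First, I would suppose for contradiction that $\phi: A \to A \circledast C(\Z/k\Z)$ is a unital $*$-homomorphism that intertwines $\alpha$ with the diagonal action $d$. I would then fix any character $\chi: C(\Z/k\Z) \to \C$, for instance evaluation at a chosen point of $\Z/k\Z$, and define
\[
\phi_t(a) := (\mathrm{id}_A \otimes \chi)\!\big(\phi(a)(t)\big), \qquad t \in [0,1],\ a \in A.
\]
Being a composition of unital $*$-homomorphisms, each $\phi_t$ is a unital $*$-homomorphism from $A$ to $A$, and $t \mapsto \phi_t(a)$ is continuous in norm for every $a$ because $\phi(a)$ lies in $C([0,1], A \otimes_{\mathrm{min}} C(\Z/k\Z))$.

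Next I would unpack the two boundary slices. Since $\phi(a)(0) \in 1_A \otimes C(\Z/k\Z)$, its image under $\mathrm{id}_A \otimes \chi$ is a scalar multiple of $1_A$, so $\phi_0$ is a one-dimensional representation of $A$. Since $\phi(a)(1) \in A \otimes 1$, the value $\phi_1(a)$ is simply the $A$-component of $\phi(a)(1)$. The diagonal action on $A \circledast C(\Z/k\Z)$ restricts to $\alpha$ on the slice $A \otimes 1 \cong A$, because the cyclic shift on $C(\Z/k\Z)$ fixes the constant function $1$; equivariance of $\phi$ therefore passes to $\alpha$-equivariance of $\phi_1$. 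The path $\phi_t$ then directly violates Theorem~\ref{thm:mainkjoinsHOM}, giving the contradiction.

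For the suspension assertion, I would simply invoke the equivariant isomorphism $(\Sigma A, \alpha_\Sigma) \cong (A \circledast C(\Z/2\Z), d)$ recorded in the introduction, so specializing the first half of the corollary to $k = 2$ resolves Conjecture~\ref{conj:dabgen}. I do not expect any genuine obstacle: the substantive work has already been absorbed into Theorem~\ref{thm:mainkjoinsHOM}, and all that remains is the bookkeeping of how the diagonal action on $A \circledast C(\Z/k\Z)$ restricts to each boundary slice, together with the observation that composing with a character of $C(\Z/k\Z)$ turns the $t=0$ slice into a one-dimensional representation.
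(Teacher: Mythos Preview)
Your proposal is correct and follows essentially the same route as the paper: the paper's proof also evaluates on the $C(\Z/k\Z)$ tensorand at a fixed group element (your character $\chi$) to obtain a path of unital $*$-homomorphisms contradicting Theorem~\ref{thm:mainkjoinsHOM}, and then invokes the equivariant isomorphism $\Sigma A \cong A \circledast C(\Z/2\Z)$ for the $k=2$ case. Your write-up simply makes the boundary checks and the equivariance of $\phi_1$ more explicit than the paper's one-line version.
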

\begin{proof}
Suppose an equivariant map $A \to A \circledast C(\Z/k\Z)$ exists. Pointwise evaluation at $0 \in \Z/k\Z$ on the $C(\Z/k\Z)$ tensorand produces a path of unital $*$-homomorphisms on $A$ connecting an $\alpha$-equivariant map to a one-dimensional representation, which contradicts Theorem \ref{thm:mainkjoinsHOM}. For $k=2$, note that $\Sigma A$ and $A \circledast C(\Z/2\Z)$ are equivariantly isomorphic.
\end{proof}

Recall that when $\Z/p\Z$, $p$ prime, acts on a compact Hausdorff space $X$, the index in \cite{vo05} bounds the set of $n$ for which there is an equivariant map from $(\Z/p\Z)^{*n}$ to $X$. Here we present an iteration procedure which simultaneously produces alternative proofs of Corollary \ref{cor:mainkjoins} along these lines and sets up an analysis of the topological case, Theorem \ref{thm:volovikov}, in terms of fixed point problems. This suggests a possible direction for the remaining case of Conjecture \ref{conj:BDHtopcaseonly} and remaining classical subcase of Conjecture \ref{conj:BDH}.

\begin{lem}\label{lem:iteration}
Let $G$ be a compact Hausdorff group acting on a unital $C^*$-algebra $A$, and suppose that there is an equivariant, unital $*$-homomorphism $\psi: A \to A \circledast C(G)$, where the join is equipped with the diagonal action. Then for any $n \in \Z^+$, there is an equivariant, unital $*$-homomorphism $\psi_n: A \to C(G^{*n})$.
\end{lem}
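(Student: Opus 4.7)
My plan is to argue by induction on $n$, using the functoriality of the $C^*$-algebraic join together with the natural equivariant identification $C(X) \circledast C(Y) \cong C(X * Y)$ for compact Hausdorff spaces $X, Y$ (with both sides carrying the diagonal $G$-action).

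For the base case $n = 1$, note that evaluation at $t=0$ gives a unital $*$-homomorphism
\[
\mathrm{ev}_0 : A \circledast C(G) \to \C \otimes C(G) \cong C(G),
\]
which is equivariant because the diagonal action on $A \circledast C(G)$ restricts at $t = 0$ to the $G$-action on $\C \otimes C(G) \cong C(G)$ inherited from the group law. Define $\psi_1 := \mathrm{ev}_0 \circ \psi$, which is an equivariant unital $*$-homomorphism $A \to C(G) = C(G^{*1})$.

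For the inductive step, suppose $\psi_{n-1} : A \to C(G^{*(n-1)})$ is equivariant. By the functoriality of $\circledast$, we obtain the equivariant unital $*$-homomorphism
\[
\psi_{n-1} \circledast \mathrm{id}_{C(G)} : A \circledast C(G) \to C(G^{*(n-1)}) \circledast C(G),
\]
defined pointwise by $f \mapsto (\psi_{n-1} \otimes \mathrm{id}) \circ f$. The boundary conditions defining the join are preserved at both endpoints (at $t = 0$, $\C \otimes C(G)$ maps into $\C \otimes C(G)$; at $t = 1$, $A \otimes \C$ maps into $C(G^{*(n-1)}) \otimes \C$), and equivariance follows from equivariance of $\psi_{n-1}$. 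Identifying the codomain equivariantly with $C(G^{*(n-1)} * G) = C(G^{*n})$, put $\psi_n := (\psi_{n-1} \circledast \mathrm{id}_{C(G)}) \circ \psi$, a composition of equivariant unital $*$-homomorphisms.

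The most delicate step, and the main thing to verify carefully, is that the standard identification $C(X) \circledast C(Y) \cong C(X * Y)$ is $G$-equivariant under the respective diagonal actions. This reduces to bookkeeping: both sides are continuous functions on $[0,1] \times X \times Y$ subject to the collapsing conditions $f(0, x_0, y) = f(0, x_1, y)$ and $f(1, x, y_0) = f(1, x, y_1)$, and the two diagonal $G$-actions coincide under this identification since in each case $G$ acts simultaneously on the $X$ and $Y$ coordinates by the prescribed actions. Once this is in hand, the induction runs without further obstacles.
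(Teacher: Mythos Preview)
Your proof is correct and follows essentially the same strategy as the paper: both arguments use functoriality of the join (applying a given equivariant map on one factor and the identity on $C(G)$), together with the equivariant identification $C(X) \circledast C(Y) \cong C(X * Y)$, and evaluation at the $t=0$ boundary. The only cosmetic difference is that the paper first iterates $\psi$ to build a chain $A \to A \circledast C(G^{*n})$ and then evaluates at $t=0$, whereas you evaluate immediately to obtain $\psi_1$ and then inductively join $\psi_{n-1}$ with $\mathrm{id}_{C(G)}$; these amount to the same construction.
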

\begin{proof}
If $\psi: A \to A \circledast C(G)$ is equivariant for a $G$-action on $A$ and the associated diagonal action on $A \circledast C(G)$, then we may join $\psi$ with the identity map on $C(G)$. This produces a map $A \circledast C(G) \to A \circledast C(G) \circledast C(G)$ which is again equivariant for the diagonal actions. Repeating this procedure, we may use as many joins as we wish and compose the maps in a chain
\begin{equation*}
A \to A \circledast C(G) \to A \circledast C(G) \circledast C(G) \to  A \circledast C(G) \circledast C(G) \circledast C(G) \to\ldots
\end{equation*}
that continues indefinitely. The $C^*$-join is dual to the topological join, so compositions produce equivariant maps
\begin{equation*}
A \to A \circledast C(G^{*n}),
\end{equation*}
and evaluating at the $t = 0$ boundary of the join produces equivariant maps 
\begin{equation*}
\psi_{n}: A \to C(G^{*n})
\end{equation*}
for any $n$, as desired. 
\end{proof}

The iteration procedure in Lemma \ref{lem:iteration} may be used to produce an alternative proof of Corollary \ref{cor:mainkjoins}.

\vspace{.1 in}

\noindent \textbf{Alternative Proof A of Corollary \ref{cor:mainkjoins}.} Suppose for the sake of contradiction that $G = \Z/k\Z$ acts freely on a unital $C^*$-algebra $A$, and that $\phi: A \to A \circledast C(G)$ is an equivariant, unital $*$-homomorphism. By Lemma \ref{lem:iteration}, there is an equivariant map $\psi_n: A \to C(G^{*n})$. Since the action on $A$ is free, the associated grading $A = \bigoplus\limits_{\tau \in \widehat{G}} A_\tau \cong \bigoplus\limits_{m \in G} A_m$ by spectral subspaces is strong in the sense of (\ref{eq:stronggrading}), so we may conclude that $1 \in A_0 = A_m A_{-m}$ for any $m \in G$. Fix $m$ which generates $G$ and select $a_1, \ldots, a_d \in A_m$, $b_1, \ldots, b_d \in A_{-m}$ with $1 = \sum\limits_{i=1}^d a_i b_i$. It follows that 
\be\label{eq:strongmet}
1 = \sum\limits_{i=1}^d \psi_{2d+1}(a_i)\psi_{2d+1}(b_i)
\ee
holds in $C(G^{*2d+1})$. Equip $G^{*2d+1}$ with the diagonal action of $G$, and equip $\S^{2d-1}$ with rotation by a primitive $k$th root of unity, so that by (\ref{eq:strongmet}), there is a continuous, equivariant map from $G^{*2d+1}$ to $\S^{2d-1}$. Both actions are free, $G^{*2d+1}$ is $(2d-1)$-connected by Proposition 4.4.3 of \cite{ma03}, and $\textrm{dim}(\S^{2d-1}) = 2d-1$, so we have reached a contradiction of Theorem \ref{thm:dabtopfinite}. (Alternatively, a contradiction can be reached in a similar way using the index.) \qed

\vspace{.1 in}

Note that when $G = \Z/2\Z$, the iterated join $G^{*2d+1}$ is the sphere $\S^{2d}$, so nonexistence of equivariant maps $G^{*2d+1} \to \S^{2d-1}$ in this case is just the Borsuk-Ulam theorem in even dimension. Similarly, the Borsuk-Ulam theorem implies that if the antipodal action on spheres is extended to the Stone-\v{C}ech compactification $\beta \S^{\infty}$ of the infinite sphere, then the resulting action has a fixed point. More generally, it is known from \cite{fe02} that for prime $p$, the diagonal action of $\Z/p\Z$ on the compactified infinite join $\beta (\Z/p\Z)^{*\infty} = \beta \left(\lim\limits_{\rightarrow} (\Z/p\Z)^{*n}\right )$ is not free. This allows us to produce a slightly different alternative proof of Corollary \ref{cor:mainkjoins}.

\vspace{.1 in}

\noindent \textbf{Alternative Proof B of Corollary \ref{cor:mainkjoins}.} Suppose $G$ is a finite cyclic group acting on a unital $C^*$-algebra $A$, and that an equivariant map $\phi: A \to A \circledast C(G)$ exists. By reducing to a subgroup of $G$, we may suppose that $G = \Z/p\Z$ for $p$ prime. An unmentioned feature of the iteration
\begin{equation*}
A \to A \circledast C(G) \to A \circledast C(G) \circledast C(G) \to A \circledast C(G) \circledast C(G) \circledast C(G) \to \ldots
\end{equation*}
used in Lemma \ref{lem:iteration} is that the resulting maps $\psi_n: A \to C(G^{*n})$ are consistent as $n$ grows. That is, they commute with the quotients $C(G^{*n+1}) \to C(G^{*n})$ dual to the embeddings $G^{*n} \to G * G^{*n} = G^{*n+1}$. We may replace the domain $A$ with its largest (nontrivial) commutative quotient $C(X)$, producing a collection of continuous, equivariant maps
\beu
f_n: G^{*n} \to X
\eeu
which commute with the embeddings $G^{*n} \to G^{*n+1}$. These extend to the direct limit $G^{*\infty}$ as a continuous, equivariant map $f: G^{*\infty} \to X$. Since $X$ is compact, we may extend the map $f$ to an equivariant map $\beta f: \beta G^{*\infty} \to X$. The domain has a fixed point by \cite{fe02}, and pushing a fixed point through $\beta f$ shows that the action of $G$ on $X$ is not free in the topological sense. Consequently, the associated action of $G$ on the $C^*$-algebra $C(X)$ is not saturated, and neither is the action on $A$, as the image of a spectral subspace of $A$ under the quotient $A \to C(X)$ is contained in the same spectral subspace of $C(X)$. Finally, we conclude that the action on $A$ does not meet the coaction freeness condition, completing a proof by contraposition. \qed

\vspace{.1 in}

The results in \cite{fe02} extend Milnor's study of the infinite join $G^{*\infty}$ as a universal $G$-bundle in \cite{mi56a} and \cite{mi56b}. Moreover, fixed points of induced maps on Stone-\v{C}ech compactifications have been well-studied (\cite{bo07}, \cite{po00}, \cite{vado93}). While the fixed point results on $\beta G^{*\infty}$ from \cite{fe02} used in Alternative Proof B do follow from the connectivity estimates on finite joins $G^{*n}$, which are actually used directly in Alternative Proof A, it is important to note that the methods are functionally different. When considering $G$ which are no longer finite, but only compact, it is not known if estimates on finite joins are sufficient, or if one needs to use the much stronger tool in Alternative Proof B (or something more). 

For the remaining case of Conjecture \ref{conj:BDHtopcaseonly}, or of Conjecture \ref{conj:BDH} in the classical setting $H = C(G)$, it is certainly possible to reduce the problem to subgroups of $G$. Therefore, it suffices to consider $G$ which are abelian and torsion-free. It is not yet clear if the methods of Alternative Proof A or Alternative Proof B of Corollary \ref{cor:mainkjoins} generalize to this context. We describe here the possible generalization of Alternative Proof A, which concerns only finitely many joins at a time.

\begin{ques}\label{ques:countingisfun}
Let $G$ be a compact, Hausdorff, abelian, torsion-free, nontrivial group. Let $G$ act on itself by multiplication and extend this action diagonally to iterated joins. Any such action is free, so the associated action on $A = C(G^{*n})$ is free. Consequently, for any $\tau$ in the Pontryagin dual $\widehat{G}$, the closed ideal generated by the spectral subspace $A_\tau$ is $A$. Does the number of elements in the $\tau$ spectral subspace of $C(G^{*n})$ required to approximate the multiplicative identity $1$ remain bounded as $n$ increases? 
\end{ques}

If Question \ref{ques:countingisfun} can generally be answered in the negative, then this fact would say that the behavior gleaned from the index $\mathrm{ind}_p$ in \cite{vo05} for $\Z/p\Z$ actions carries over to a more general setting, and that the spiritual successor of this technique, Alternative Proof A, would extend to such $G$. On the other hand, if the answer is not always negative, then one might benefit from an examination of $G^{*\infty}$ using more sophisticated tools than eigenfunction counting on finite joins.

Now, compact, Hausdorff, abelian, torsion-free groups are classified in \cite{ho13} and are fairly exotic, such as $G = \widehat{(\Q, \mathcal{D})}$, where $\mathcal{D}$ is the discrete topology. Since this group is comprised of characters with domain $\Q$, any $q \in \Q$ produces a character on $G$, namely, the evaluation $\mathrm{ev}_q: f \mapsto f(q)$. By Pontryagin duality, all characters of $G$ are of this form, and it would be of interest to settle Question \ref{ques:countingisfun} in this special case.

\section{A Noncommutative Suspension and $\Z/k\Z$-Join}

Certain $\theta$-deformed noncommutative spheres (\cite{ma91}, \cite{na97}) may be written as unreduced suspensions based on an examination of their presentations. Here $\rho \in M_n(\C)$ denotes a matrix which may be written entrywise as $\rho_{jk} = e^{2 \pi i \theta_{jk}}$ for some antisymmetric $\theta \in M_n(\R)$.
\beu
C(\S^{2n-1}_\rho) = C^*\left(z_1, \ldots, z_n \hspace{2 pt} \left| \hspace{2 pt} z_j z_j^* = z_j^* z_j, z_k z_j = \rho_{jk}z_j z_k, \sum_{j=1}^n z_jz_j^* = 1 \right.\right)
\eeu
\beu \ba
C(\S^{2n}_\rho) &= C^*\left(z_1, \ldots, z_n, x \hspace{2 pt} \left| \hspace{2 pt} z_j z_j^* = z_j^* z_j, x = x^*, z_k z_j = \rho_{jk}z_j z_k, x z_j = z_j x, x^2 + \sum_{j=1}^n z_jz_j^* = 1 \right.\right) \\
& \cong C(\S^{2n+1}_\omega)/\langle z_{n+1} - z_{n+1}^* \rangle, \hspace{.15 in} \omega_{jk} = \left\{\begin{array}{cccc} \rho_{jk}, & j \leq n \textrm{ and } k \leq n \hfill \\ 1, & j = n+1 \textrm{ or } k = n+1 \end{array}\right.
\ea \eeu
For example, every even $\theta$-sphere is realized as the unreduced suspension $C(\S^{2n}_\rho) \cong \Sigma C(\S^{2n - 1}_\rho)$, and if $C(\S^{2n + 1}_\omega)$ is such that $z_{n + 1}$ is central, then $C(\S^{2n + 1}_\omega) \cong \Sigma C(\S^{2n}_\rho)$ for an appropriately chosen $\rho$. In these cases, the antipodal action, which by definition negates each generator of the presentation, suspends to the antipodal action. However, this procedure does not capture all $(2n + 1)$-spheres, and it seems reasonable that we might reach some odd sphere $C(\S^{2n + 1}_\omega)$ with anticommutation relations on $z_{n + 1}$ by passing through a different type of suspension procedure twice, starting from $C(\S^{2n-1}_\rho)$. In particular, at each suspension we wish to add a self-adjoint generator that is not central, and in doing so we also develop a noncommutative join $A \circledast^\alpha C^*(\Z/k\Z)$ that is different from the $C^*$-algebraic joins in \cite{da15b}.

We adopt the convention that the group $C^*$-algebra $C^*(\Z/k\Z)$ and crossed products $A \rtimes_\alpha \Z/k\Z$ will have group elements denoted by $\delta^n$, where $\delta$ is a prescribed generator of $\Z/k\Z$. For example, a general element of $A \rtimes_\alpha \Z/k\Z$ is $a_0 + a_1 \delta + a_2 \delta^2 + \ldots + a_{k - 1} \delta^{k - 1}$. Also, an action $\alpha$ of $\Z/k\Z$ on $A$ is identified with its generating homomorphism $\alpha: A \to A$ of order dividing $k$.

\begin{lem}\label{lem:crosstorus}
Let $A_\theta$ denote a quantum 2-torus whose unitary generators anticommute. If $\alpha$ denotes the antipodal action on $C(\S^1)$ and $\widehat{\alpha}$ denotes the dual action on the crossed product $C(\S^1) \rtimes_\alpha \Z/2\Z$ (so $\widehat{\alpha}(f + g\delta) = f - g\delta$), then $A_\theta \cong \{h \in C(\S^1, C(\S^1) \rtimes_\alpha \Z/2\Z): \widehat{\alpha}(h(-w)) = h(w) \textrm{ for all } w \in \S^1 \}.$
\end{lem}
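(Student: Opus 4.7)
The plan is to construct an explicit $*$-isomorphism from $A_\theta$ onto the fixed-point algebra
\[ B := \{h \in C(\S^1, C(\S^1)\rtimes_\alpha \Z/2\Z): \widehat{\alpha}(h(-w)) = h(w) \textrm{ for all } w \in \S^1\}. \]

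First I would identify two candidate unitaries in $B$: namely $V(w) := z$ (the constant map taking the value of the inner generator $z \in C(\S^1)$) and $U(w) := iw\delta$. The defining invariance of $B$ is verified directly: $\widehat{\alpha}$ fixes the inner $C(\S^1)$, so $\widehat{\alpha}(V(-w)) = z = V(w)$; and $\widehat{\alpha}(U(-w)) = \widehat{\alpha}(-iw\delta) = (-iw)(-\delta) = iw\delta = U(w)$ since $\widehat{\alpha}(\delta) = -\delta$. Using the crossed-product relation $\delta z = -z\delta$ that encodes the antipodal $\alpha$, a one-line computation gives $UV = -VU$. Since $A_\theta$ is the universal unital $C^*$-algebra generated by two anticommuting unitaries, there is a unique unital $*$-homomorphism $\phi: A_\theta \to B$ with $\phi(u) = V$ and $\phi(v) = U$.

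Next I would show $\phi$ is surjective. Every $h \in B$ decomposes uniquely as $h(w) = f(w) + g(w)\delta$ with $f, g \in C(\S^1 \times \S^1)$, and the invariance condition translates precisely to $f$ being even in $w$ and $g$ being odd in $w$. The monomials in $U, V$ compute as $U^{2m} V^n = (-1)^m w^{2m} z^n$ and $U^{2m+1} V^n = (-1)^m i w^{2m+1} z^n \delta$; together with their adjoints, these span a unital $*$-subalgebra of $B$. Applying Stone--Weierstrass separately to the even and odd parity classes shows this subalgebra is norm-dense in $B$, so $\phi(A_\theta) = B$.

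Finally, I would prove injectivity by comparing faithful traces. The crossed product $C(\S^1) \rtimes_\alpha \Z/2\Z$ carries the canonical faithful tracial state $\mathrm{tr}_C(f + g\delta) := \int_{\S^1} f\,dz$, and integrating in $w$ produces a faithful tracial state $\tau_B(h) := \int_{\S^1} \mathrm{tr}_C(h(w))\,dw$ on $B$. A direct computation on the monomials above yields $(\tau_B \circ \phi)(u^m v^n) = \delta_{m,0}\delta_{n,0}$, matching the canonical faithful trace $\tau$ on $A_\theta$, and continuity extends the identity $\tau_B \circ \phi = \tau$ to all of $A_\theta$. If $\phi(a) = 0$ then $\tau(a^*a) = \tau_B(\phi(a)^*\phi(a)) = 0$, and faithfulness of $\tau$ forces $a = 0$; combined with surjectivity, $\phi$ is the desired $*$-isomorphism.

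I expect the main obstacle to be the injectivity step, which rests on faithfulness of the canonical trace on the (rational) quantum $2$-torus $A_\theta$ at the anticommuting parameter; this is standard but non-trivial, usually established by realizing $A_\theta$ as continuous sections of an $M_2$-bundle over $\T^2$. A viable alternative is to construct an explicit left inverse $B \to A_\theta$ sending $U \mapsto v$ and $V \mapsto u$, but verifying well-definedness reduces to essentially the same content.
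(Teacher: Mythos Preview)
Your proposal is correct and follows essentially the same approach as the paper: both construct the $*$-homomorphism by sending the generators of $A_\theta$ to the constant map $w \mapsto z$ and (up to your harmless scalar $i$) the map $w \mapsto w\delta$, prove injectivity by showing the map is trace-preserving and invoking faithfulness of the canonical trace on $A_\theta$, and prove surjectivity via the even/odd decomposition in $w$ together with a Stone--Weierstrass-type approximation. Your write-up is in fact slightly more explicit than the paper's, and you correctly flag that faithfulness of the trace at this rational parameter is the one nontrivial external input.
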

\begin{proof}
Denote the standard generator of $C(\S^1)$ by $\chi$, and let $\gamma: A_\theta \to C(\S^1, C(\S^1) \rtimes_\alpha \Z / 2\Z)$ be defined by $\gamma(U_j) = g_j$, where $g_1(w) = \chi$ is a constant function in $w$ and $g_2(w) = w \delta$. Note that in both images, the symmetry condition $\widehat{\alpha}(g_j(-w)) = g_j(w)$ imposed on the target space is satisfied. Now, $\gamma$ is also trace-preserving, where $A_\theta$ has the usual trace and $C(\mathbb{S}^1, C(\mathbb{S}^1) \rtimes_\alpha \Z/2\Z)$ is given the trace $\tau(f + g \delta) = \int \int f(w)[z] d\mu(z) d\mu(w)$, where $\mu$ is normalized Lebesgue measure on $\S^1$. The usual trace on $A_\theta$ is faithful, so $\gamma$ is injective.

To show that $\textrm{Ran}(\gamma)$ includes all $h \in C(\S^1, C(\S^1) \rtimes_\alpha \Z / 2\Z)$ with $\widehat{\alpha}(h(-w)) = h(w)$, it is sufficient to prove that $\textrm{Ran}(\gamma)$ includes a dense subset of the target, as $C^*$-homomorphisms have closed range. Target elements are of the form $w \mapsto f(w) + g(w) \delta$ where $f(w), g(w) \in C(\S^1)$ have $f(-w)[z] = f(w)[z]$ and $g(-w)[z] = -g(w)[z]$. Define $F, G \in C(\T^2)$ by $F(w, z) = f(w)[z]$ and $G(w, z) = g(w)[z]$, so that $F(-w, z) = F(w, z)$ and $G(-w, z) = -G(w, z)$. Both $F$ and $G$ are uniformly approximable by $*$-polynomials $P(w, z) = p(w)[z]$, $Q(w, z) = q(w)[z]$ that meet the same symmetry conditions in the first coordinate, and $w \mapsto p(w) + q(w) \delta$ is certainly in the range of $\gamma$.
\end{proof}

\begin{lem}\label{lem:crosssphere}
If $C(\S^3_\rho)$ denotes the $\theta$-deformed $3$-sphere with $z_2 z_1 = -z_1 z_2$, and $\alpha$ denotes the antipodal action on $C(\S^1)$, then $C(\S^3_\rho) \cong \{f \in C(\overline{\D}, \C(\S^1) \rtimes_\alpha \Z/2\Z): \widehat{\alpha}(f(-w)) = f(w) \textrm{ for all } w,$ $\textrm {and if } w \in \del \D, \textrm{then } f(w) \in \C + \C \delta\}$.
\end{lem}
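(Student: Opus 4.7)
The plan is to produce an explicit $*$-isomorphism $\gamma: C(\S^3_\rho) \to T$, where $T$ denotes the target algebra on the right-hand side, by setting
\[
\gamma(z_1)(w) = w\delta, \qquad \gamma(z_2)(w) = \sqrt{1-|w|^2}\,\chi,
\]
with $\chi$ the identity character on $\S^1$. First I would check that each $\gamma(z_j)$ lies in $T$: continuity in $w$ is clear; the symmetry $\widehat{\alpha}(\gamma(z_j)(-w)) = \gamma(z_j)(w)$ holds because $\widehat{\alpha}(-w\delta) = w\delta$ and $\widehat{\alpha}(\chi) = \chi$; and the boundary condition holds since at $|w|=1$ we have $\gamma(z_1)(w) = w\delta \in \C + \C\delta$ and $\gamma(z_2)(w) = 0$. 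A direct fiberwise calculation using $\delta^2 = 1$ and $\delta\chi = -\chi\delta$ then shows the $\gamma(z_j)$ are normal with $\gamma(z_1)\gamma(z_1)^* + \gamma(z_2)\gamma(z_2)^* = 1$ and $\gamma(z_2)\gamma(z_1) = -\gamma(z_1)\gamma(z_2)$, so the universal property of $C(\S^3_\rho)$ promotes $\gamma$ to a unital $*$-homomorphism.

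For injectivity, I would adapt the trace strategy of Lemma \ref{lem:crosstorus}: equip $C(\S^3_\rho)$ with its standard faithful tracial state (invariant under the natural $\T^2$-gauge on $z_1, z_2$), and equip $T$ with the trace obtained by extracting the $\delta^0$-coefficient of the fiber, integrating in $z$ against normalized Lebesgue measure on $\S^1$, and integrating in $w$ against, say, normalized area measure on $\overline{\D}$. Verifying $\gamma$ is trace-preserving on a spanning family of monomials in the generators then forces injectivity by faithfulness.

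For surjectivity, since $*$-homomorphisms have closed range, it suffices to show density. Writing each fiber element as $F + G\delta$ with $F, G \in C(\S^1)$, an element of $T$ becomes a pair $(F, G) \in C(\overline{\D} \times \S^1)^2$ satisfying $F(-w,z) = F(w,z)$, $G(-w,z) = -G(w,z)$, and independence of $z$ on the boundary $|w|=1$. The products $\gamma(z_1^a (z_1^*)^b z_2^c (z_2^*)^d)$ produce (up to sign) monomials of the form $w^a\overline{w}^b(1-|w|^2)^{(c+d)/2}\chi^{c-d}\delta^{a+b}$, and the parities match the constraints automatically: $a+b$ even lands in the $F$-slot with $w^a\overline{w}^b$ invariant under $w \mapsto -w$, $a+b$ odd lands in the $G$-slot with the required oddness, and whenever $c \neq d$ the factor $(1-|w|^2)^{(c+d)/2}$ vanishes on $\partial\D$ as required. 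Fourier-decomposing $F$ and $G$ in $z$ and applying Stone-Weierstrass both on $[0,1]$ in the radial variable $|w|^2$ and in the polynomial variables $w, \overline{w}$ on $\overline{\D}$ then approximates any $(F,G) \in T$ uniformly.

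The main obstacle will be the density bookkeeping: one must confirm that the monomials above exhaust exactly those Fourier coefficients allowed by the constraints---no fewer, no more---which splits into cases by the parity of $a+b$ and the sign of $c-d$. In addition, passing from the limited family $\{(1-|w|^2)^{m/2}\}_{m\geq 1}$ of radial factors to arbitrary continuous functions on $\overline{\D}$ that vanish on $\partial\D$ to the appropriate order requires a careful Stone-Weierstrass application in the single real variable $|w|^2 \in [0,1]$ for each Fourier mode; this is routine, but must be carried out uniformly across modes to assemble the final uniform approximation of $(F,G)$.
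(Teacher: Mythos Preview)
Your proposal is correct and essentially parallels the direct computation carried out for Lemma~\ref{lem:crosstorus}, but the paper proceeds differently. Rather than proving injectivity via a trace and surjectivity via a density argument, the paper builds an explicit inverse to $\phi$ in one stroke: it slices $\overline{\D}$ into concentric circles of radius $t\in[0,1]$, observes that on each circle the symmetry condition $\widehat{\alpha}(f(-w))=f(w)$ is exactly the hypothesis of Lemma~\ref{lem:crosstorus}, so each slice is an element of the anticommuting quantum torus $A_\theta$, and then invokes Theorem~2.5 of \cite{na97}, which identifies $C(\S^3_\rho)$ with $\{g\in C([0,1],A_\theta): g(0)\in C^*(U_1),\ g(1)\in C^*(U_2)\}$. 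The boundary conditions on $T$ match those of the Natsume--Olsen picture, and checking that this assignment inverts $\phi$ is immediate. The advantage of the paper's route is economy: injectivity and surjectivity come for free once the inverse is exhibited, and no trace computation or mode-by-mode Stone--Weierstrass bookkeeping is needed. The advantage of yours is self-containment: it does not rely on the external structural result from \cite{na97}, and the trace-matching (with normalized area measure on $\overline{\D}$, both sides give Beta integrals $B(a+1,c+1)$ on $|z_1|^{2a}|z_2|^{2c}$) makes the argument entirely internal. One small correction to your density sketch: the approximation is not really ``Stone--Weierstrass in the radial variable $|w|^2$'' separately, since the monomials $w^a\overline{w}^b$ are not radial; rather, it is a single Stone--Weierstrass on the quotient $\overline{\D}/(w\sim -w)$, where $w^2$ already separates points, together with the ideal-theoretic fact that the closure of $(1-|w|^2)^{|n|/2}\cdot C_{\mathrm{even}}(\overline{\D})$ is the full vanishing ideal of $\partial\D$.
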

\begin{proof}
Similar to the argument of Lemma \ref{lem:crosstorus}, let $\phi: C(\S^3_\rho) \to C(\overline{\mathbb{D}}, C(\S^1) \rtimes_\alpha \Z/2\Z)$ be defined by $\phi(z_1)[w] = \sqrt{1 - |w|^2} z$ and $\phi(z_2)[w] = w \delta$. By viewing $\overline{\D}$ as a union of concentric circles shrinking to a point, any element of $C(\overline{\mathbb{D}}, C(\S^1) \rtimes_\alpha \Z/2\Z)$ meeting the symmetry and boundary conditions of the target is a continuous family of functions $\psi_t: \S^1 \to C(\S^1) \rtimes_\alpha \Z/2\Z$ indexed by $ t \in [0, 1]$, where $\psi_0$ outputs a single member of $C(\S^1)$, $\psi_1$ is $\C + \C \delta$-valued, and $\widehat{\alpha}(\psi_t(-w)) = \psi_t(w)$. Lemma \ref{lem:crosstorus} shows that each $\psi_t$ represents an element of the quantum torus $A_\theta$ with two anticommuting generators $U_1$ and $U_2$, where $\psi_0$ gives an element of $C^*(U_1)$ and $\psi_1$ gives an element of $C^*(U_2)$. Now, $(\psi_t)_{t \in [0, 1]}$ produces a single element of $C(\mathbb{S}^3_\rho)$ by \cite{na97}, Theorem 2.5, and a straightforward computation shows that this procedure has produced an inverse for $\phi$.
\end{proof}

If the anticommuting sphere $C(\S^3_\rho)$ is viewed as an algebra of functions on $\overline{\D}$ as in Lemma \ref{lem:crosssphere}, then $z_2 - z_2^*$ vanishes on $[-1, 1] \subset \overline{\D}$, and the quotient $C(\S^3_\rho) / \langle z_2 - z_2^* \rangle$ is isomorphic to an algebra of functions on $[-1, 1]$:
\begin{equation*} \{f \in C([-1, 1], C(\S^1) \rtimes_\alpha \Z/2\Z): \widehat{\alpha}(f(-t)) = f(t) \textrm{ for all } t \in [-1, 1], f(\pm1) \in \C + \C \delta\}. \end{equation*}
The symmetry condition renders the domain $[-1, 1]$ redundant, so
\begin{equation*} C(\S^3_\rho) / \langle z_2 - z_2^* \rangle \cong \{f \in C([0, 1], C(\S^1) \rtimes_\alpha \Z/2\Z): f(0) \in C(\S^1), f(1) \in \C + \C \delta \}. \end{equation*}

In other words, to add a self-adjoint, anticommuting generator to $C(\S^1)$, it suffices to look at a space of functions into a crossed product. There is no obstruction to extending this technique to higher-dimensional odd $\theta$-deformed spheres $C(\S^{2n-1}_\rho)$. Moreover, the antipodal action $\alpha$ could be replaced by another action which only negates certain generators $z_i$ of $C(\S^{2n-1}_\rho)$; the new self-adjoint generator will then commute or anticommute with $z_i$ depending on the action. Written in full generality, this procedure may be considered a type of noncommutative unreduced suspension.

\begin{defn}
Let $A$ be a unital $C^*$-algebra with an action $\beta$ of $\Z/2\Z$. The $\beta$\textit{-twisted noncommutative unreduced suspension} of $A$ is defined as 
\begin{equation*}
\Sigma^\beta A := \{f \in C([0,1], A \rtimes_\beta \Z/2\Z): f(0) \in A, f(1) \in \C + \C\delta = C^*(\Z/2\Z)\}.
\end{equation*}
\end{defn}

This twisted unreduced suspension does not appear to match up with the double unreduced suspension found in \cite{ho08}. When $\beta$ is the trivial action, $\Sigma^\textrm{triv} A$ is isomorphic to $\Sigma A$, where the crossed product encodes homogeneity classes of functions. Specifically,
$\Sigma A \cong \Sigma^{\textrm{triv}} A$ via $f(t) \mapsto \frac{f(t) + f(-t)}{2} + \frac{f(t) - f(-t)}{2}\delta$, where we note that while $f(t)$ is defined on $[-1, 1]$, the image functions are only defined on $[0, 1]$. The inverse image of $h + k \delta \in \Sigma^\textrm{triv}A$ is then $H + K$, where $H$ is the even extension of $h$ and $K$ is the odd extension of $k$.

For the usual unreduced suspension, any $\Z/2\Z$ action on $A$ could be extended to $\Sigma A$ in a natural way, by applying the original action pointwise and composing with $t \mapsto -t$. For $\Sigma^\beta A$, we may extend an action $\alpha$ only if it commutes with $\beta$, and in this case we apply $\alpha$ pointwise on $A \rtimes_\beta \Z/2\Z$ and compose with $\widehat{\beta}$, which negates the group element $\delta$.
\beu
\widetilde{\alpha}: f(t) + g(t) \delta \mapsto \alpha(f(t)) - \alpha(f(t))\delta
\eeu

These ideas may be extended easily from suspensions to joins. If $X$ is a compact Hausdorff space on which $G$ acts, then the (non-equivariant) noncommutative join of \cite{da15b} is motivated by the identification
\beu \ba
C(X * G) 	&\cong \{f \in C([0, 1], C(X \times G)): f(0) \textrm{ is } G\textrm{-independent}, f(1) \textrm{ is } X\textrm{-independent} \} \\
		&\cong \{f \in C([0, 1], C(X) \otimes C(G)): f(0) \in C(X), f(1) \in C(G) \}. \\
\ea \eeu
A different noncommutative join for finite cyclic groups may be developed by realizing $C(X) \otimes C(\Z/k\Z)$  as a trivial crossed product
\beu
C(X * \Z/k\Z) \cong \{f \in C([0, 1], C(X) \rtimes_\textrm{triv} \Z/k\Z): f(0) \in C(X), f(1) \in C^*(\Z/k\Z) \}.
\eeu

\begin{defn}\label{defn:twistedjoin}
Let $A$ be a unital $C^*$-algebra with a $\Z/k\Z$ action $\beta$. Then the $\beta$\textit{-twisted noncommutative join of} $A$ \textrm{ and } $C^*(\Z/k\Z)$ is defined as 
\begin{equation*}
A \circledast^\beta C^*(\Z/k\Z) := \{f \in C([0, 1], A \rtimes_\beta \Z/k\Z): f(0) \in A, f(1) \in C^*(\Z/k\Z) \}.
\end{equation*}
\end{defn}

Note that $A \circledast^\textrm{triv} C^*(\Z / k \Z)$ is isomorphic to the (non-equivariant) noncommutative join $A \circledast C(\Z / k\Z)$ of \cite{da15b}. An action $\alpha$ of $\Z/k\Z$ on $A$ which commutes with $\beta$ may be extended to $A \circledast^\beta C^*(\Z/k\Z)$ by applying $\alpha$ pointwise and also applying the dual action of $\beta$. Since the Pontryagin dual of $\Z/k\Z$ is isomorphic to $\Z/k\Z$ in a non-canonical way, we fix a primitive $k$th root of unity $\omega = e^{2 \pi i / k}$. An element of $A \circledast^\beta C^*(\Z/k\Z)$ may be viewed as $f_0 + f_1 \delta + \ldots + f_{k - 1} \delta^{k - 1}$ where $f_0, f_1, \ldots, f_n$ are functions from $[0, 1]$ into $A$. Then we extend $\alpha$ to $A \circledast^\beta C^*(\Z/k\Z)$ as $\widetilde{\alpha}$, just as in the $\Z/2\Z$ case:
\beu 
\widetilde{\alpha}(f_0 + f_1\delta + \ldots + f_{k - 1}\delta^{k - 1})(t) := \alpha(f_0(t)) + \omega \alpha(f_1(t)) \delta + \ldots + \omega^{k - 1} \alpha(f_{k - 1}(t)) \delta^{k - 1}.
\eeu

The isomorphism $C(\Z/k\Z) \cong C^*(\Z/k\Z)$ may be written in such a way that the dual action of $\widehat{\Z/k\Z} \cong \Z/k\Z$ on $C^*(\Z/k\Z)$ is equivalent to the translation action of $\Z/k\Z$ on $C(\Z/k\Z)$. We ask the following question in analogy with \cite{ba15} and \cite{da15}.

\begin{ques}\label{ques:NCjoin}
Suppose $A$ is a unital $C^*$-algebra with two commuting $\Z/k\Z$ actions $\alpha$ and $\beta$. If $\alpha$ is saturated and $\beta$ is unsaturated, must there not exist any unital, $(\alpha, \tilde{\alpha})$-equivariant $*$-homomorphisms from $A$ to $A \circledast^\beta C^*(\Z/k\Z)$?
\end{ques}

In other words, we ask if the noncommutative Borsuk-Ulam theorem in Corollary \ref{cor:mainkjoins} can survive when the join is twisted by a crossed product. While the construction in Definition \ref{defn:twistedjoin} can certainly be extended to more general Pontryagin duals $C(G) \cong C^*(\Gamma)$, with tensor products $A \otimes C^*(\Gamma)$ in the classical join replaced by crossed products $A \rtimes \Gamma$, we limit ourselves to the finite cyclic case as a base point, as this is the current state of the non-twisted Borsuk-Ulam results in $C^*$-algebraic language. In the twisted setting, the $\theta$-deformed spheres may be used as examples. Any odd $\theta$-deformed sphere $C(\S^{2n - 1}_\rho)$ admits $\Z/k\Z$ actions which rotate the generators $z_j$ by roots of unity. Fix such an action $\beta$ which maps $z_j$ to $\omega_j z_j$, where each $\omega_j$ is a not necessarily primitive $k$th root of unity. If $\alpha$ denotes another such action, using only primitive $k$th roots of unity, then $\alpha$ is a saturated action, $\alpha$ and $\beta$ commute, and $\alpha$ extends to an action $\widetilde{\alpha}$ on $C(\S^{2n-1}_\rho) \circledast^\beta C^*(\Z/k\Z)$.

\begin{exam}
Let $C(\S^{2n - 1}_\rho)$ be equipped with coordinate rotations $\alpha$ and $\beta$, as above, where $\alpha$ is saturated. Then there is no unital, $(\alpha, \widetilde{\alpha})$-equivariant $*$-homomorphism from $C(\S^{2n - 1}_\rho)$ to the join $C(\S^{2n - 1}_\rho) \circledast^\beta C^*(\Z/k\Z)$.
\end{exam}
\begin{proof}
Suppose $\psi: C(\S^{2n-1}_\rho) \to C(\S^{2n-1}_\rho) \circledast^\beta C^*(\Z/k\Z)$ is an $(\alpha, \widetilde{\alpha})$-equivariant, unital $*$-homomorphism. For each $t \in [0,1]$, $\mathrm{ev}_t \circ \psi: C(\S^{2n-1}_\rho) \to C(\S^{2n-1}_\rho) \rtimes_\beta \Z/k\Z$ is a $K_1$-trivial map, as for any $U \in U_k(C(\S^{2n-1}_\rho))$, $\mathrm{ev}_t(\psi(U))$ is homotopy equivalent inside $U_k(C(\S^{2n-1}_\rho) \rtimes_\beta \Z/k\Z)$ to $\mathrm{ev}_1(\psi(U)) \in U_k(C^*(\Z/k\Z))$, and $K_1(C^*(\Z/k\Z))$ is the trivial group. However, $\iota_*: K_1(C(\S^{2n-1}_\rho)) \to K_1(C(\S^{2n-1}_\rho) \rtimes_\beta \Z/k\Z)$ is injective, where $\iota: C(\S^{2n-1}_\rho) \to C(\S^{2n-1}_\rho) \rtimes_\beta \Z / k \Z$ is the inclusion map. It follows that the $(\alpha, \alpha)$-equivariant map $\mathrm{ev}_0 \circ \psi: C(\S^{2n-1}_\rho) \to C(\S^{2n-1}_\rho)$ is $K_1$-trivial, which contradicts \cite{pa15a}, Corollary 4.12.
\end{proof}

In this case, the $K$-theory invariant which produces Borsuk-Ulam theorems on $C(\S^{2n-1}_\rho)$ retains its potency in the crossed product. The action $\beta$ considered above can be either saturated or unsaturated, but the generating homomorphism is always in the same path class within $\textrm{Aut}(C(\S^{2n-1}_\rho))$ as the identity automorphism. The iteration technique in the previous section is not applicable to Question \ref{ques:NCjoin}, and we require $\beta$ to be unsaturated to rule out some simple counterexamples. If $\alpha = \beta$ were allowed to be saturated, one could consider iterated joins of $C^*(\Z/2\Z)$ in analogy with the usual unreduced suspension, but this method will surely fail due to the following example.

\begin{exam}
Let $B_n$ be the universal, unital $C^*$-algebra generated by self-adjoint elements $x_1, \ldots, x_{n + 1}$ which pairwise anticommute and satisfy $x_1^2 + \ldots + x_{n + 1}^2 = 1$. Equip $B_n$ with a $\Z/2\Z$ action $\alpha$ that negates each generator. Then there is an odd, self-adjoint, unitary element $x_1 + \ldots + x_{n + 1} \in B_n$, and therefore there is also an equivariant homomorphism $\psi_n: B_0 = C(\S^0) \to B_n$ defined by $\psi_n(x_1) = x_1 + \ldots + x_{n+1}$. Further, let $\phi: B_1 \to B_0 \circledast^\alpha C^*(\Z / 2\Z)$ be defined by $\phi(x_1)[t] = \sqrt{1-t^2}x_1$ and $\phi(x_2)[t] = t\delta$. Then $\phi \circ \psi_1: B_0 \to B_0 \circledast^\alpha C^*(\Z / 2\Z) = \Sigma^\alpha B_0$ is $(\alpha, \widetilde{\alpha})$-equivariant.
\end{exam}

The assumption that $\alpha$ and $\beta$ have different saturation properties removes the above pathological case. In a similar vein, we demand that $\beta$ is unsaturated so that it is in some sense close to the trivial action, as we know that Corollary \ref{cor:mainkjoins} applies when $\beta$ is trivial. The general case of Question \ref{ques:NCjoin}, however, remains open.

\section{Acknowledgments}

I am very grateful to Marc Rieffel, Alan Weinstein, and Guoliang Yu for hosting visits to University of California, Berkeley and Texas A\&M University, during which a fair portion of this work was completed, and to Piotr M. Hajac and Mariusz Tobolski for a very careful reading and thorough commentary. I would also like to thank my advisors John McCarthy and Xiang Tang at Washington University in St. Louis for their extensive support during my graduate study, and Orr Shalit and Baruch Solel for their support during my current postdoc at Technion-Israel Institute of Technology. Last, but certainly not least, the manuscript has been significantly improved by the comments of the referee.

\def\polhk#1{\setbox0=\hbox{#1}{\ooalign{\hidewidth
  \lower1.5ex\hbox{`}\hidewidth\crcr\unhbox0}}}
  \def\polhk#1{\setbox0=\hbox{#1}{\ooalign{\hidewidth
  \lower1.5ex\hbox{`}\hidewidth\crcr\unhbox0}}}
  \def\polhk#1{\setbox0=\hbox{#1}{\ooalign{\hidewidth
  \lower1.5ex\hbox{`}\hidewidth\crcr\unhbox0}}}
  \def\polhk#1{\setbox0=\hbox{#1}{\ooalign{\hidewidth
  \lower1.5ex\hbox{`}\hidewidth\crcr\unhbox0}}} \def\cprime{$'$}


\begin{thebibliography}{10}

\bibitem{ba15}
{P}aul~{F}. {B}aum, {L}udwik {D}{\polhk{a}}browski, and {P}iotr~{M.} {H}ajac.
\newblock {N}oncommutative {B}orsuk-{U}lam-type conjectures.
\newblock {\em {B}anach {C}enter {P}ubl.}, 106:9--18, 2015.

\bibitem{ba13}
{P}aul~{F}. {B}aum, Kenny de~{C}ommer, and {P}iotr~{M.} {H}ajac.
\newblock {F}ree {A}ctions of {C}ompact {Q}uantum {G}roups on {U}nital
  ${C}$*-algebras. arxiv:1304.2812. To appear in Documenta Mathematica.

\bibitem{bo07}
Semeon~A. Bogatyi and Vitaly~V. Fedorchuk.
\newblock Schauder's {F}ixed {P}oint and {A}menability of a {G}roup.
\newblock {\em Topol. Methods Nonlinear Anal.}, 29(2):383--401, 2007.

\bibitem{ca77}
Andrew Casson and Daniel~Henry Gottlieb.
\newblock {F}ibrations with {C}ompact {F}ibres.
\newblock {\em Amer. J. Math.}, 99(1):159--189, 1977.

\bibitem{co02}
{A}lain {C}onnes and {M}ichel {D}ubois {V}iolette.
\newblock {N}oncommutative {F}inite-dimensional {M}anifolds. {I}. {S}pherical
  {M}anifolds and {R}elated {E}xamples.
\newblock {\em {C}omm. {M}ath. {P}hys.}, 230(3):539--579, 2002.

\bibitem{da15}
Ludwik D{\polhk{a}}browski.
\newblock Towards a noncommutative {B}rouwer fixed-point theorem.
\newblock {\em J. Geom. Phys.}, 105:60--65, 2016.

\bibitem{da15b}
Ludwik D{\polhk{a}}browski, Tom Hadfield, and Piotr~M. Hajac.
\newblock {E}quivariant {J}oin and {F}usion of {N}oncommutative {A}lgebras.
\newblock {\em SIGMA Symmetry Integrability Geom. Methods Appl.}, 11:Paper 082,
  7, 2015.
  
\bibitem{de13}
Kenny De Commer and Makoto Yamashita.
\newblock {A} construction of finite index {${\rm C}^*$}-algebra
  inclusions from free actions of compact quantum groups.
\newblock {\em Publ. Res. Inst. Math. Sci.}, 49(4):709--735, 2013.

\bibitem{do83}
{A}lbrecht {D}old.
\newblock {S}imple proofs of some {B}orsuk-{U}lam results.
\newblock In {\em {P}roceedings of the {N}orthwestern {H}omotopy {T}heory
  {C}onference ({E}vanston, {I}ll., 1982)}, volume~19 of {\em {C}ontemp.
  {M}ath.}, pages 65--69. {A}mer. {M}ath. {S}oc., {P}rovidence, {R}{I}, 1983.

\bibitem{el00}
David~Alexandre Ellwood.
\newblock A new characterisation of principal actions.
\newblock {\em J. Funct. Anal.}, 173(1):49--60, 2000.

\bibitem{fe02}
David Feldman and Alexander Wilce.
\newblock {D}egenerate {F}ibres in the {S}tone-\v {C}ech {C}ompactification of
  the {U}niversal {B}undle of a {F}inite {G}roup.
\newblock {\em Trans. Amer. Math. Soc.}, 354(9):3757--3769 (electronic), 2002.

\bibitem{go82}
Daniel~Henry Gottlieb.
\newblock The {L}efschetz number and {B}orsuk-{U}lam theorems.
\newblock {\em Pacific J. Math.}, 103(1):29--37, 1982.

\bibitem{ho13}
{K}arl~{H}. {H}ofmann and {S}idney~A. {M}orris.
\newblock {\em {T}he {S}tructure of {C}ompact {G}roups}, volume~25 of {\em De
  Gruyter Studies in Mathematics}.
\newblock De Gruyter, Berlin, 2013.
\newblock A primer for the student---a handbook for the expert, Third edition,
  revised and augmented.

\bibitem{ho08}
Jeong~Hee Hong and Wojciech Szyma{\'n}ski.
\newblock Noncommutative balls and mirror quantum spheres.
\newblock {\em J. Lond. Math. Soc. (2)}, 77(3):607--626, 2008.

\bibitem{ma03}
{J}i{\v{r}}{\'{\i}} {M}atou{\v{s}}ek.
\newblock {\em {U}sing the {B}orsuk-{U}lam {T}heorem}.
\newblock {U}niversitext. {S}pringer-{V}erlag, {B}erlin, 2003.
\newblock {L}ectures on topological methods in combinatorics and geometry,
  {W}ritten in cooperation with {A}nders {B}j{\"o}rner and {G}{\"u}nter {M}.
  {Z}iegler.

\bibitem{ma91}
{K}engo {M}atsumoto.
\newblock {N}on-commutative three-dimensional spheres.
\newblock {\em {J}apan. {J}. {M}ath. ({N}.{S}.)}, 17(2):333--356, 1991.

\bibitem{mi56a}
John Milnor.
\newblock Construction of {U}niversal {B}undles. {I}.
\newblock {\em Ann. of Math. (2)}, 63:272--284, 1956.

\bibitem{mi56b}
John Milnor.
\newblock Construction of {U}niversal {B}undles. {II}.
\newblock {\em Ann. of Math. (2)}, 63:430--436, 1956.

\bibitem{mo93}
Susan {M}ontgomery.
\newblock {\em Hopf algebras and their actions on rings}, volume~82 of {\em CBMS Regional Conference Series in Mathematics}.
\newblock AMS, Providence, RI, 1993.

\bibitem{mu16}
{O}leg~{R}. Musin and {A}lexey {Yu}. Volovikov.
\newblock {T}ucker-{T}ype {L}emmas for {$G$}-spaces. arXiv:1612.07314.


\bibitem{na97}
{T}. {N}atsume and {C}.~{L}. {O}lsen.
\newblock {T}oeplitz {O}perators on {N}oncommutative {S}pheres and an {I}ndex
  {T}heorem.
\newblock {\em {I}ndiana {U}niv. {M}ath. {J}.}, 46(4):1055--1112, 1997.

\bibitem{pa15a}
{B}enjamin {P}asser.
\newblock {A} {N}oncommutative {B}orsuk-{U}lam {T}heorem for {N}atsume-{O}lsen
  {S}pheres.
\newblock {\em {J}. {O}perator {T}heory}, 75(2):337--366, 2016.

\bibitem{ph09}
{N}.~{C}hristopher {P}hillips.
\newblock {F}reeness of actions of finite groups on {$C^*$}-algebras.
\newblock In {\em {O}perator structures and dynamical systems}, volume 503 of
  {\em {C}ontemp. {M}ath.}, pages 217--257. {A}mer. {M}ath. {S}oc.,
  {P}rovidence, {R}{I}, 2009.

\bibitem{po00}
El{\.z}bieta Pol.
\newblock Two {E}xamples of {P}erfectly {N}ormal {S}paces.
\newblock {\em Houston J. Math.}, 26(2):335--341, 2000.

\bibitem{ta12}
{A}li {T}aghavi.
\newblock {A} {B}anach {A}lgebraic {A}pproach to the {B}orsuk-{U}lam {T}heorem.
\newblock {\em {A}bstr. {A}ppl. {A}nal.}, pages Art. ID 729745, 11, 2012.

\bibitem{va90}
L.~L. Vaksman and Ya.~S. So{\u\i}bel{\cprime}man.
\newblock {A}lgebra of functions on the quantum group {${\rm SU}(n+1),$} and
  odd-dimensional quantum spheres.
\newblock {\em Algebra i Analiz}, 2(5):101--120, 1990.

\bibitem{vado93}
Eric~K. van Douwen.
\newblock {$\beta X$} and fixed-point free maps.
\newblock {\em Topology Appl.}, 51(2):191--195, 1993.

\bibitem{vo05}
A.~Yu. Volovikov.
\newblock Coincidence points of mappings of {$Z^n_p$}-spaces.
\newblock {\em Izv. Ross. Akad. Nauk Ser. Mat.}, 69(5):53--106, 2005.

\bibitem{ya13}
{M}akoto {Y}amashita.
\newblock {E}quivariant {C}omparison of {Q}uantum {H}omogeneous {S}paces.
\newblock {\em {C}omm. {M}ath. {P}hys.}, 317(3):593--614, 2013.

\end{thebibliography}
\end{document}